\DeclareSymbolFont{cyrillic}{T2A}{cmr}{m}{n}
\def\makecyrsymbol#1#2{%
  \begingroup\edef\temp{\endgroup
    \noexpand\DeclareMathSymbol{\noexpand#1}
    {\noexpand\mathalpha}{cyrillic}%
    {\expandafter\expandafter\expandafter
     \calccyr\expandafter\meaning\csname T2A\string#2\endcsname\end}}%
  \temp}
\def\expandafter\calccyr\string\char#1\end{#1}
\newtheoremstyle{dotless}{}{}{\itshape}{}{\bfseries}{}{ }{}
\newtheorem{thm}{Theorem}
\newtheorem{lem}[thm]{Lemma}
\theoremstyle{dotless}
\renewcommand{\Re}{\operatorname{\mathfrak{Re}}}
\renewcommand{\Im}{\operatorname{\mathfrak{Im}}}
\newcommand{\Sym}{\mathrm{Sym}}
\newcommand{\Z}{\mathbb{Z}}
\newcommand{\R}{\mathbb{R}}
\newcommand{\Q}{\mathbb{Q}}
\newcommand{\F}{\mathbb{F}}
\newcommand{\id}{\mathop{\mathrm{id}}}
\newcommand{\Aut}{\mathop{\mathrm{Aut}}}
\newcommand{\Gal}{\mathop{\mathrm{Gal}}}
\newcommand{\inj}{\hookrightarrow}
\newcommand{\Avg}{\mathop{\mathrm{Avg}}}
\newcommand{\SL}{\mathrm{SL}}
\newcommand{\GL}{\mathrm{GL}}
\newcommand{\actson}{\curvearrowright}
\newcommand{\mat}[4]{\left(\begin{array}{cc} #1 & #2\\ #3 & #4\end{array}\right)}
\newcommand{\twobytwo}[4]{\mat{#1}{#2}{#3}{#4}}
\newcommand{\eps}{\varepsilon}
\newcommand{\disc}{\mathrm{disc}}
\newcommand{\A}{\mathbb{A}}
\newcommand{\K}{\mathbb{K}}
\newcommand{\Qbar}{\overline{\Q}}
\renewcommand{\d}{\partial}
\renewcommand{\P}{\mathbb{P}}
\newcommand{\diag}{\mathrm{diag}}
\newcommand{\PGL}{\mathrm{PGL}}
\newcommand{\Sel}{\mathrm{Sel}}
\newcommand{\Fbar}{\overline{\F}}
\newcommand{\smalltwobytwo}[4]{\left(\begin{smallmatrix} #1 & #2\\ #3 & #4\end{smallmatrix}\right)}
\newcommand{\SO}{\mathrm{SO}}
\newcommand{\nontriv}{\mathrm{nontriv.}}
\newcommand{\supp}{\mathop{\mathrm{supp}}}
\newcommand{\Inv}{\mathrm{Inv}}
\let\uglyphi\phi
\let\phi\varphi
\let\emptyset\varnothing
 \DeclareFontFamily{U}{wncy}{}
    \DeclareFontShape{U}{wncy}{m}{n}{<->wncyr10}{}
    \DeclareSymbolFont{mcy}{U}{wncy}{m}{n}
    \DeclareMathSymbol{\Sha}{\mathord}{mcy}{"58} 
\newcommand{\greekfont}[1]{#1}
\renewcommand{\greekfont}[1]{{\fontfamily{txr}\selectfont #1}}
\title{Quadrics in arithmetic statistics.}
\author{\Large Levent Alp\"{o}ge\\~\\\text{\greekfont{\large \foreignlanguage{greek}{Εις μνήμην της Χρυσής Νότσκα.}}}}
\date{}
\DeclareRobustCommand{\pmods}[1]{\mkern4mu({\operator@font mod}\mkern6mu#1)}
\begin{document}

\maketitle

\renewcommand{\abstractname}{Abstract.}

\begin{abstract}
We (re)introduce the circle method into arithmetic statistics.

More specifically, we combine the circle method with Bhargava's counting technique in order to give a general method that allows one to treat arithmetic statistical problems in which one is trying to count orbits on a subvariety of affine space defined by the vanishing of a quadratic invariant.

We explain this method by way of example by sketching how to compute the average size of $2$-Selmer groups in each of the families $y^2 = x^3 + B^k$, giving full details in the case $k\equiv 1\pmods{6}$.

In the course of the argument we introduce a smoothed form of Bhargava's aforementioned method, as well as a trick which allows us to compute the above averages from knowledge of the averages over "unconstrained" families.
\end{abstract}

\section{Introduction.}
The field of arithmetic statistics lacks a general technique to count orbits on nontrivial invariant subvarieties. Essentially every\footnote{We know of exactly one work that faces point counting on a nontrivial subvariety "directly", namely Samuel Ruth's Ph.D.\ thesis \cite{chapterthree-ruth's-thesis}. However, there are certainly others, e.g.\ Yao Xiao's \cite{yao-xiao}, that change such counting problems into more tractable ones by using particular features of their situations. While quite clever, we believe they are not relevant here because our goal is to produce a general technique (awkward as it is to formulate general statements in this context).} application of the counting technique invented by Bhargava in his Ph.D.\ thesis has reduced such an orbit counting problem to one of counting lattice points in an expanding region and then dealt\footnote{We are of course abbreviating significantly.} with the latter by invoking a standard lemma of Davenport, which we will think of as a jazzed up version of the usual count of integral points of bounded height in affine space.

The purpose of this paper is to give the first general technique going beyond this regime.

The technique arises from a substantial simplification we give of the argument in Samuel Ruth's Ph.D.\ thesis \cite{chapterthree-ruth's-thesis}, in which he bounds the average size of $2$-Selmer groups in the family of Mordell curves $y^2 = x^3 + B$. The reason his argument needs simplification is that his treatment of what we will call the "tail" is quite hard-going and in any case particular to his problem.

That said, his idea of combining the circle method with Bhargava's counting method makes what we will call the "bulk" contribution trivial to deal with, and this is our starting point. We then replace his treatment of the "tail" with a trick. Finally we forego any calculation of local densities with another trick --- by formally deducing that the $2$-Selmer average in question must match the $2$-Selmer average over the "unconstrained" family of elliptic curves $y^2 = x^3 + Ax + B$.

A slightly less informal summary of the technique is as follows. We follow Ruth in using the circle method to count said points in the bulk (namely when not "polynomially high in the cusp") by using the smoothed delta symbol method, and then we provide an a priori upper bound for the point count in the tail (namely when polynomially high in the cusp) via a divisor bound. As usual the divisor bound produces an estimate which is subpolynomially worse than sharp --- however the key point is that this loss is more than compensated for by the "overconvergence" of the volume integral (and the fact that we only apply said estimate when polynomially high in the cusp). As for how we avoid any calculation of constants, note that the circle method already outputs a product of local densities, which are informally calculated by "thickening" an equality to a congruence modulo a highly divisible integer $N$ (the singular integral has the analogous property) and taking $N\to \infty$. Thus our $2$-Selmer average, aka the $2$-Selmer average over the family $y^2 = x^3 + Ax + B$ with the equality $A = 0$ imposed, is $3$ because the $2$-Selmer average over the family $y^2 = x^3 + Ax + B$, with only the congruence $A\equiv 0\pmods{N}$ imposed, is $3$ (independent of $N$!), by work of Bhargava-Shankar.

Now let us give a precise statement of the example application.

\subsection{Main theorem.}

\begin{thm}\label{all six families}
Let $k\in \Z$. Let $\mathcal{B}\subseteq \Z - \{0\}$ be a set of positive density defined by congruence conditions. Then: $$\Avg_{B\in \mathcal{B} : |B|\leq X} \#|\Sel_2(E_{0,B^k}/\Q)|\leq \left(\begin{cases} 3 & k\not\equiv 0\pmods{3}\\ 2 & k\equiv 0\pmods{6}\\ \infty & k\equiv 3\pmods{6}\end{cases}\right) + O_{\mathcal{B}}(o_{X\to \infty}(1)),$$
with equality if e.g.\footnote{We write "with equality if e.g." to emphasize that this is not an if and only if statement.}\ $\mathcal{B}\subseteq \Z-\{0\}$ is defined by finitely many congruence conditions.
\end{thm}\noindent
Here a subset $\mathcal{B}\subseteq \Z - \{0\}$ is defined by congruence conditions if and only if $\mathcal{B}\cap \Z^+\subseteq \Z^+$ and $(-\mathcal{B})\cap \Z^+\subseteq \Z^+$ are, and a subset $\mathcal{B}\subseteq \Z^+$ is defined by congruence conditions if and only if for all $p$ there is an open subset $\mathcal{B}_p\subseteq \Z_p$ such that $\mathcal{B} = \Z^+\cap \prod_p \mathcal{B}_p$ as subsets of $\prod_p \Z_p$. It is straightforward to adapt our arguments to prove equality in Theorem \ref{all six families} for the more general class of subsets $\emptyset\neq \mathcal{B}\subseteq \Z - \{0\}$ which are "large" in a sense completely analogous to that of Bhargava-Shankar's \cite{chaptertwo-bhargava-shankar-2-selmer}, but we will spare ourselves the notational effort.

There is of course nothing to do when $k\equiv 0\pmods{3}$: the statement is evident if $k\equiv 0\pmods{6}$ and follows from the fact that $\#|\Sel_2(E_{0,B^k}/\Q)|\geq 2^{\omega_{7\bmod{12}}(B) - \omega_{11\bmod{12}}(B)}$ (consider Tamagawa factors for the evident $2$-isogeny) when $k\equiv 3\pmods{6}$, where $\omega_{a\bmod{b}}(n) := \#|\{p\mid n : p\equiv a\pmods{b}\}|$.

In the case $k\equiv 1\pmods{6}$ Theorem \ref{all six families} is a slight generalization of the main theorem of Ruth's Ph.D.\ thesis \cite{chapterthree-ruth's-thesis} (see Theorem $1.1.2$ of his \cite{chapterthree-ruth's-thesis}, which amounts to a proof of the upper bound when $\mathcal{B} = \Z - \{0\}$) that was important for \cite{cubic-paper-with-ari-and-manjul} (and thus \cite{quartic-paper-with-ari-and-manjul}).

In our eyes it is also this case that serves as the clearest example of the method. Indeed, tracking the condition "$A = 0$" through the arguments of Bhargava-Shankar \cite{chaptertwo-bhargava-shankar-2-selmer}, we find that we must count orbits of binary quartic forms $F(X,Y)\in \Z[X,Y]$ with classical invariants $I(F) = 0$ and $|J(F)|$ bounded. We are counting points on a quadric because, writing $F(X,Y) =: a\cdot X^4 + b\cdot X^3 Y + c\cdot X^2 Y^2 + d\cdot X Y^3 + e\cdot Y^4$, $$I(F) = 12ae - 3bd + c^2.$$

We will sketch the remaining cases. In the case $k\equiv 2\pmods{6}$, as in Bhargava-Ho \cite{chapterthree-bhargava-ho-website} the relevant parametrization of $2$-Selmer elements is by orbits of pairs of binary cubic forms $(F_1, F_2)$ with $F_i\in \Z[X,Y]$, and now the invariant quadric is $0 = I(F_1,F_2) = 3 a_1 d_2 - b_1 c_2 + c_1 b_2 - 3 d_1 a_2$, where $F_i(X,Y) =: a_i\cdot X^3 + \cdots + d_i\cdot Y^3$, but otherwise is --- at a high level --- similar to the case $k\equiv 1\pmods{6}$, but is more intricate and dealt with in joint work with Bhargava and Shnidman \cite{sum-of-two-cubes-paper-with-ari-and-manjul}. What is perhaps different is our deduction of the cases $k\equiv 4,5\pmods{6}$ by rescaling our orbits suitably (the difficulty being that one must preserve integrality) and then appealing to results already proven in the corresponding case $k\equiv 3\pmods{6}$ --- an argument that ultimately works because $E_{0,B^k}$ and $E_{0,B^{k+3}}$ are quadratic twists, and so their $2$-Selmer groups are both subsets of e.g.\ $H^1(\Gal(\Qbar/\Q), E_{0,B^k}[2])$ defined by --- potentially different, of course --- local conditions. (Note that this trick already occurs in \cite{sum-of-two-cubes-paper-with-ari-and-manjul}, and, though our treatment is explicit and does not manifestly involve this cohomological interpretation, we certainly learned said interpretation from that joint work.) In the case $k\equiv 4\pmods{6}$ we will also explain how to arrive to the same result through another explicit parametrization of $\Sel_2(E_{0,B^4}/\Q)$ by binary quartic forms. Note that the case $k\equiv 4\pmods{6}$ is treated in yet another way --- in fact by reducing to results already proven in the case $k\equiv 2\pmods{6}$ instead --- in the aforementioned joint work \cite{sum-of-two-cubes-paper-with-ari-and-manjul}.

Note also that via the aforementioned trick one deduces the statement of Theorem \ref{all six families} for the slightly more general families $\{y^2 = x^3 + d B^k : B\in \mathcal{B}\}$ with $k\in \Z$, $0\neq d\in \Z$ and $\mathcal{B}\subseteq \Z - \{0\}$ (quadratic twist by $d$ in the nontrivial case $k\not\equiv 0\pmods{3}$).

\subsection{Main technique.}

Having stated the main theorem, let us discuss the method of proof. In order to be specific, let us put ourselves in the case $k\equiv 1\pmods{6}$: we would like to count, up to $\PGL_2(\Q)$-equivalence, the locally soluble binary quartic forms $F\in \Z[X,Y]$ with $I(F) = 0$ and $0\neq |J(F)|\leq X$. Write then $V := \Sym^4(2)$ for the space of binary quartic forms. It is now standard that, using a method introduced in Bhargava's Ph.D.\ thesis \cite{chapterthree-bhargava-thesis} and first carried out in this context by Bhargava-Shankar \cite{chaptertwo-bhargava-shankar-2-selmer} (let us ignore our smoothing for the sake of this discussion), the problem immediately reduces to the problem of obtaining an asymptotic of the following form:
\begin{align*}
&\int_{1\ll \lambda\ll X^{\frac{1}{24}}} d^\times \lambda\, \int_{|u|\ll 1} du\, \int_{1\ll t\ll \lambda} t^{-2} d^\times t\, \#|\{F\in \lambda\cdot n_u\cdot a_t\cdot G_0\cdot L\cap V(\Z)^{\mathrm{irred.}} : I(F) = 0\}|
\\&\sim \mathrm{const.}\cdot X^{\frac{1}{2}}.
\end{align*}\noindent
Here $n_u := \smalltwobytwo{1}{0}{u}{1}$, $a_t := \smalltwobytwo{t^{-1}}{0}{0}{t}$, and $d^\times z := \frac{dz}{z}$, so that $d^\times \lambda\, du\, t^{-2} d^\times t\, dk$ is the Haar measure on $\GL_2^+(\R)$ (the "$+$" denoting positive determinant) induced by Haar measures on $\Lambda, N, A, K$ under the Iwasawa decomposition $\GL_2^+(\R) = \Lambda\cdot N\cdot A\cdot K$, where $\Lambda$ are the scalars, $N$ are the lower unipotents, $A$ is the diagonal torus, and $K := \SO_2(\R)$ is the maximal compact.

For the sake of exposition, the reader should imagine $G_0\cdot L\subseteq V(\R)$ as a small ball of binary quartic forms of height $\asymp 1$, and indeed should imagine (though also keep in mind that this is a slight oversimplification) that the set $\lambda\cdot n_u\cdot a_t\cdot G_0\cdot L\cap V(\Z)^{\mathrm{irred.}}$ is simply the set $S_{\mathrm{approx.}}$, say, of $(a,b,c,d,e)\in \Z$ satisfying $a,e\neq 0$ and:
\begin{align*}
|a|&\ll \frac{\lambda^4}{t^4},
\\|b|&\ll \frac{\lambda^4}{t^2},
\\|c|&\ll \lambda^4,
\\|d|&\ll t^2\cdot \lambda^4,
\\|e|&\ll t^4\cdot \lambda^4.
\end{align*}

Examining the integral, one evidently reduces to needing to treat $$\#|\{F\in \lambda\cdot n_u\cdot a_t\cdot G_0\cdot L\cap V(\Z)^{\mathrm{irred.}} : I(F) = 0\}|.$$ Were there no condition $I(F) = 0$, this would essentially be counting lattice points in a somewhat "round" set scaled by a parameter $\lambda$, which is simple --- in the standard treatment one invokes an easy lemma of Davenport, which is precisely tailored for this sort of counting problem. Indeed, examining the defining inequalities of $S_{\mathrm{approx.}}$, it is evidently quite simple to evaluate $\#|S_{\mathrm{approx.}}|$.

Thus the entire difficulty is in dealing with the condition $I(F) = 0$. However we are simply asking to count zeroes of a quadratic form in five variables that lie in the scalings of a somewhat "round" set, and this is easy if the set is in fact quite "round" --- in other words, if we are in the "bulk" and $1\ll t\ll_\eps \lambda^\eps$, say, then the defining inequalities of $S_{\mathrm{approx.}}$ are all, up to $X^{O(\eps)}$, the same, so that $S_{\mathrm{approx.}}$ is essentially a scaled cube, and it is straightforward to count the number of solutions of a quadratic form in five variables lying in such a set, by the oldest forms of the circle method. For convenience we follow Ruth and use the smoothed delta symbol method, which trivializes the problem. In the end we obtain the asymptotic $$\#|\{F\in \lambda\cdot n_u\cdot a_t\cdot B\cdot L\cap V(\Z)^{\mathrm{irred.}} : I(F) = 0\}| = \mathrm{const.}\cdot \lambda^{12} + O_\eps(\lambda^{12 - \delta + O(\eps)})$$ for a positive absolute constant $\delta > 0$.

Thus the entire problem has reduced to treating the count when we are in the "tail", i.e.\ $\lambda^\eps\ll_\eps t\ll \lambda$. Here Ruth uses an argument that again involves the smoothed delta symbol method and is quite specific to the situation.

Our observation is that the "tail" case is also obviously trivial. Specifically, it is obvious that $$\#|\{F\in \lambda\cdot n_u\cdot a_t\cdot B\cdot L\cap V(\Z)^{\mathrm{irred.}} : I(F) = 0\}|\ll \lambda^{12 + o(1)}$$ by the divisor bound: $12ae - 3bd + c^2 = 0$ implies that $a,e\,\vert\, 3bd - c^2$. Because we are dealing with elements in $V(\Z)^{\mathrm{irred.}}$ (and thus $a,e\neq 0$), it follows that $(b,c,d)$ determine $(a,e)$ up to $\ll \lambda^{o(1)}$ choices, and the number of $(b,c,d)$ is\footnote{While we have seemingly used the special form of $I(F)$ in this argument, this divisor bound argument works in general --- the point is that in a box a binary quadratic form represents a nonzero integer few times, because of a divisor bound in at most a quadratic extension (and the usual proof of Dirichlet's unit theorem to deal with units).} $\ll \lambda^{12}$.

Because of the $o(1)$ in the exponent, this bound is not quite as sharp as what we obtained in the "bulk", but since we are in the "tail" we needn't work so hard. The point is that the condition $t\gg_\eps \lambda^\eps$ combines with the $t^{-2} d^\times t$ in the Haar measure (note that the exponent $2$ is more than is needed for convergence, so to speak) to imply that this bound is enough to give a bound $\ll_\eps \lambda^{12 - \Omega(\eps)}$ on the "tail" contribution after integrating over $u$ and $t$.

So in the end we obtain the desired asymptotic, which is equivalent to the theorem via Bhargava's counting method. The argument in the case of pairs of binary cubic forms is the same, and indeed the above argument easily generalizes to other quadrics in at least four variables (said restriction arising in order to ensure that the circle method analysis in the "bulk" goes through easily).

\section{Acknowledgments.}
Chrysi Notskas' influence on my life, mathematical and otherwise, was profound and this work is dedicated to her memory.\footnote{It was through her that I was introduced to higher mathematics (and also to parallel programming and computer vision)! I found her phrase "bigger and better things", regarding how to respond to the unjustifiable, unforgettable.} This article is based on Chapter $3$ of the author's Ph.D.\ thesis at Princeton University \cite{my-thesis}. I would like to thank both my advisor Manjul Bhargava and Peter Sarnak for their patience and encouragement. I would also like to thank Joseph Glynias, Ari Shnidman, Odysseus Skartsis, and Jacob Tsimerman for informative discussions. Finally I thank the National Science Foundation (via their grant DMS-$2002109$), Columbia University, and the Society of Fellows for their support during the pandemic.

It has been indicated to me that it is worth clarifying the following. As usual I will not be giving this paper to some journal. In contrast, a complete and detailed proof of Theorem \ref{all six families} for $k\equiv 2\pmods{6}$ (cf.\ the sketch given in Section \ref{square mordell curves section}) will appear (and thus be journal published along with the analytic method itself), in the joint work \cite{sum-of-two-cubes-paper-with-ari-and-manjul}.

\section{Smoothing in Bhargava's counting method.}\label{smoothing section}

Let us now introduce the aforementioned technical convenience that simplifies Bhargava's counting method, as first introduced in Chapter $5$ of his Ph.D.\ thesis \cite{chapterthree-bhargava-thesis}, though the trick of averaging over fundamental domains was first introduced in the published version (see Section $2.2$ of Bhargava's \cite{chapterthree-bhargava-quartic-discriminants}). The point is that, while an average over $G_0$ (using the notation of the above outline) improves the situation considerably, one is still integrating a "rough" function, namely $\mathbbm{1}_{G_0}$, and it is wiser to instead integrate a compactly supported smooth function. We note that this is completely natural from Bhargava's original formulation --- see equation $(4)$ in Section $2.2$ of Bhargava's \cite{chapterthree-bhargava-quartic-discriminants}, and note that we are taking (in his notation) $\Phi$ to be smooth and compactly supported, rather than the indicator function of a box.

In order to be specific, and for the reader's convenience, let us work in the setup of the proof of Theorem \ref{all six families} for $k\equiv 1\pmods{6}$ (it will be clear how to modify the construction for Theorem $k\equiv 2\pmods{6}$, and indeed in any application of Bhargava's counting method). That is, $V := \Sym^4(2), G := \GL_2$, $L := L^{(0)}\coprod L^{(1)}\coprod L^{(2+)}\coprod L^{(2-)}$ with
\begin{align*}
L^{(0)} &:= \left\{X^3 Y - \frac{1}{3}\cdot X Y^3 + \frac{J}{27}\cdot Y^4 : J\in (-2,2)\right\},\\
L^{(1)} &:= \left\{X^3 Y - \frac{I}{3}\cdot X Y^3 \pm \frac{2}{27}\cdot Y^4 : I\in [-1,1)\right\}\cup \left\{X^3 Y + \frac{1}{3}\cdot X Y^3 + \frac{J}{27}\cdot Y^4 : J\in (-2,2)\right\},\\
L^{(2\pm)} &:= \pm \left\{\frac{1}{16} X^4 - \sqrt{\frac{2-J}{27}}\cdot X^3 Y + \frac{1}{2}\cdot X^2 Y^2 + Y^4 : J\in (-2,2)\right\},
\end{align*}\noindent
and $$\mathcal{F} := \left\{\lambda\cdot n_u\cdot a_t\cdot k : \lambda\in \R^+, u\in \nu(t)\subseteq \left[-\frac{1}{2}, \frac{1}{2}\right], t\geq \sqrt{\frac{\sqrt{3}}{2}}, k\in \SO_2(\R)\right\}\subseteq G(\R),$$ Gauss's classical fundamental domain for $G(\Z)\actson G(\R)$. Note that $L$ is a fundamental domain for $G(\R)\actson V(\R)^{\Delta\neq 0}$. Observe that the points with $I = 0$ and $\Delta\neq 0$ are all in the $G(\R)$-orbit of the two forms $F_{\pm}(X,Y) := X^3 Y \pm \frac{2}{27}\cdot Y^4$, which lie in the interior of $L^{(1)}$ (and thus lie in small compact subintervals thereof). Thus the following setup suffices for us.

Write, for each $v\in V(\R)^{\Delta\neq 0}$, $v_L\in L$ for the unique element of $L$ mapping to the image of $v$ under $V(\R)^{\Delta\neq 0}\to V(\R)^{\Delta\neq 0}/G(\R)\simeq L$.

Let $\mu_\pm\in C_c^\infty(\R)$ be such that $\mu_-\leq \mathbbm{1}_{[-1,1]}\leq \mu_+$. Let $\alpha\in C_c^\infty(G(\R))$ and $\beta\in C_c^\infty(L)$ be compactly supported smooth functions such that: $\alpha$ is $\SO_2(\R)$-invariant, $\int_{G(\R)} \alpha = 1$, $\beta(F_{\pm}) = 1$, and $\supp{\beta}, \beta^{-1}(\{1\})\subseteq L^{(1)}$ are both unions of two small compact intervals respectively containing $F_\pm$. Let $$\phi(v) := \sum_{g\cdot v_L = v} \alpha(g)\cdot \beta(v_L).$$ Note that this is a finite sum because stabilizers of elements of $V(\R)^{\Delta\neq 0}$ are finite.

Via $\mu_\pm$, $\alpha$, and $\beta$ we get a slightly more convenient way\footnote{Specifically, this insertion of a smooth weight in Bhargava's main trick in his counting technique saves us the effort required to remove smooth weights when applying the smoothed delta symbol method. We note here that we use $\beta$ to ensure smoothness of $\phi$ --- note that $L^{(0)}\coprod L^{(1)}$ is a rectangle missing two corners, and at the other two corners a similar definition of $\phi$ with $\beta = 1$ identically would fail to be smooth. One can get around this, of course, but this choice simplifies notation.} to smooth out the various integrals in Bhargava's counting technique --- instead of integrating the normalized indicator function $\frac{1}{\int_{G_0} dg}\cdot \mathbbm{1}_{G_0}$ of $G_0$ over $g\in G(\R)$ (i.e.\ integrating over $g\in G_0$) and observing that $g\cdot \mathcal{F}$ is also (the closure of) a fundamental domain for $G(\Z)\actson G(\R)$ so that all counts are independent of $g$, we instead integrate $\alpha(g)$ over $g\in G$ and then make the same observation.

Specifically, we observe that, since $$\#|\{F\in \mathcal{F}\cdot g\cdot L\cap V(\Z)^\nontriv : I(F) = 0, 0\neq |J(F)|\leq X\}|$$ is constant in $g$ outside a measure-zero subset of $G(\R)$ (since $\mathcal{F}\cdot g$ and $\mathcal{F}$ are both fundamental domains for $G(\Z)\actson G(\R)$ --- the first main observation of Section $2.2$ of Bhargava's \cite{chapterthree-bhargava-quartic-discriminants}), it follows that:
\begin{align*}
&\frac{\int_{g\in G_0} dg\, \#|\{F\in \mathcal{F}\cdot g\cdot L\cap V(\Z)^\nontriv : I(F) = 0, 0\neq |J(F)|\leq X\}|}{\int_{g\in G_0} dg}
\\&= \int_{G(\R)} dg\, \alpha(g)\cdot \#|\{F\in \mathcal{F}\cdot g\cdot L\cap V(\Z)^\nontriv : I(F) = 0, 0\neq |J(F)|\leq X\}|.
\end{align*}\noindent
The left-hand side is precisely Ruth's $N(Y^{\mathrm{irr}}, X)$.

We then manipulate this expression just as in Section $2.3$ of Bhargava-Shankar's \cite{chaptertwo-bhargava-shankar-2-selmer} (and implicitly in Bhargava's \cite{chapterthree-bhargava-quartic-discriminants}).

Evidently:
\begin{align*}
&\int_{G(\R)} dg\, \alpha(g)\cdot \#|\{F\in \mathcal{F}\cdot g\cdot L\cap V(\Z)^\nontriv : I(F) = 0, 0\neq |J(F)|\leq X\}|
\\&= \sum_{v\in V(\Z)^\nontriv : I(v) = 0, 0\neq |J(v)|\leq X} \int_{G(\R)} dh\, \alpha(h)\cdot \#|\{g\in \mathcal{F} : gh\cdot v_L = v\}|.
\end{align*}

But, using that $\beta(v_L) = 1$ if $I(v) = 0$ and $\Delta(v)\neq 0$ (since this implies that $v_L = F_+$ or $F_-$), for $v\in V(\Z)^\nontriv$ with $I(v) = 0$ and $J(v)\neq 0$ each integral can be evaluated as follows:
\begin{align*}
&\int_{G(\R)} dh\, \alpha(h)\cdot \#|\{g\in \mathcal{F} : gh\cdot v_L = v\}|
\\&= \sum_{\gamma\in G(\R): \gamma\cdot v_L = v} \int_{G(\R)} dh\, \alpha(h)\cdot \beta(v_L)\cdot \#|\{g\in \mathcal{F} : gh = \gamma\}|
\\&= \sum_{\gamma\in G(\R) : \gamma\cdot v_L = v} \int_{\mathcal{F}^{-1}\cdot \gamma} dh\, \alpha(h)\cdot \beta(v_L)
\\&= \sum_{\gamma\in G(\R) : \gamma\cdot v_L = v} \int_{\mathcal{F}^{-1}} dh\, \alpha(h\cdot \gamma)\cdot \beta(v_L)
\\&= \int_{\mathcal{F}} dh\, \sum_{\gamma\in G(\R) : \gamma\cdot v_L = v} \alpha(h^{-1}\cdot \gamma)\cdot \beta(v_L)
\\&= \int_{\mathcal{F}} dh\, \sum_{g\in G(\R) : g\cdot v_L = h^{-1}\cdot v} \alpha(g)\cdot \beta(v_L)
\\&= \int_{\mathcal{F}} dh\, \phi(h^{-1}\cdot v),
\end{align*}\noindent
by definition.

Therefore we have found that:
\begin{align*}
N(Y(\Z)^\nontriv, X) &:= \#|\{F\in \mathcal{F}\cdot L\cap V(\Z)^\nontriv : I(F) = 0, 0\neq |J(F)|\leq X\}|
\\&= \sum_{v\in V(\Z)^\nontriv : I(v) = 0, 0\neq |J(v)|\leq X} \int_{\mathcal{F}} dh\, \phi(h^{-1}\cdot v)
\\&= \int_{\mathcal{F}} dh\, \sum_{v\in V(\Z)^\nontriv : I(v) = 0, 0\neq |J(v)|\leq X} \phi(h^{-1}\cdot v),
\end{align*}\noindent
and so e.g. $N_{\mu_-}(Y(\Z)^\nontriv, X)\leq N(Y(\Z)^\nontriv, X)\leq N_{\mu_+}(Y(\Z)^\nontriv, X)$, where:
$$N_{\mu_\pm}(Y(\Z)^\nontriv, X) := \int_{\mathcal{F}} dh\, \sum_{v\in V(\Z)^\nontriv : I(v) = 0, J(v)\neq 0} \mu_\pm\left(\frac{J(v)}{X}\right)\cdot \phi(h^{-1}\cdot v).$$

\section{Proof of Theorem \ref{all six families} for $k\equiv 1\pmods{6}$.}\label{mordell curves section}

Now let us turn to the proof of Theorem \ref{all six families} when $k\equiv 1\pmods{6}$.

\subsection{Reduction to point counting.}

For the reader's convenience we use Ruth's notation in what follows. Let $V := \Sym^4(2)$. Let $G := \GL_2$. Let, for $F\in V$,
\begin{align*}
I(F) &:= 12ae - 3bd + c^2,
\\ J(F) &:= 72ace + 9bcd - 27ad^2 - 27b^2 e - 2c^3,
\end{align*}\noindent
so that the discriminant $$\Delta(F) = 4I^3 - J^2,$$ where we have written $F(X,Y) =: a\cdot X^4 + b\cdot X^3 Y + c\cdot X^2 Y^2 + d\cdot X Y^3 + e\cdot Y^4$. Let $G\actson V$ via $(g\cdot F)(X,Y) := F((X,Y)\cdot g)$. We note that, for $g\in G$ and $F\in V$, we have that:
\begin{align*}
I(g\cdot F) &= (\det{g})^4\cdot I(F),
\\ J(g\cdot F) &= (\det{g})^6\cdot J(F).
\end{align*}

Let $L := L^{(0)}\coprod L^{(1)}\coprod L^{(2+)}\coprod L^{(2-)}$ with
\begin{align*}
L^{(0)} &:= \left\{X^3 Y - \frac{1}{3}\cdot X Y^3 + \frac{J}{27}\cdot Y^4 : J\in (-2,2)\right\},\\
L^{(1)} &:= \left\{X^3 Y - \frac{I}{3}\cdot X Y^3 \pm \frac{2}{27}\cdot Y^4 : I\in [-1,1)\right\}\cup \left\{X^3 Y + \frac{1}{3}\cdot X Y^3 + \frac{J}{27}\cdot Y^4 : J\in (-2,2)\right\},\\
L^{(2\pm)} &:= \pm \left\{\frac{1}{16} X^4 - \sqrt{\frac{2-J}{27}}\cdot X^3 Y + \frac{1}{2}\cdot X^2 Y^2 + Y^4 : J\in (-2,2)\right\},
\end{align*}\noindent
a fundamental domain for $G(\R)\actson V(\R)^{\Delta\neq 0}$.

Let $$\mathcal{F} := \left\{\lambda\cdot n_u\cdot a_t\cdot k : \lambda\in \R^+, u\in \nu(t), t\geq \sqrt{\frac{\sqrt{3}}{2}}, k\in \SO_2(\R)\right\}\subseteq G(\R),$$ where $\nu(t)\subseteq [-\frac{1}{2},\frac{1}{2}]$ is $[-\frac{1}{2}, \frac{1}{2})$ when $t\gg 1$ or else a union of two subintervals of $[-\frac{1}{2}, \frac{1}{2}]$ when $\sqrt{\frac{\sqrt{3}}{2}}\leq t\ll 1$ (just imagine the usual Gauss fundamental domain for $\SL_2(\Z)\actson \mathfrak{h}$, and note that $t^2$ corresponds to $\Im{\tau}$ and $u$ corresponds to $\Re{\tau}$). Here we have written
\begin{align*}
n_u &:= \twobytwo{1}{0}{u}{1},
\\a_t &:= \twobytwo{t^{-1}}{0}{0}{t}.
\end{align*}

We note that $\mathcal{F}$ is a fundamental domain for $G(\Z)\actson G(\R)$.

Let $\delta\in \R^+$ be a small absolute constant (e.g.\ $\delta := 10^{-10^{10}}$ will certainly suffice). Let $\eta\in \R$ (we will eventually take $\eta := \pm X^{-\delta^2}$). Let $\mu: \R\to [0,1]$ be $C^\infty$, such that $\mu = 0$ outside $[-1-2\eta, 1+2\eta]$, and such that $\mu = 1$ on $[-1-2\eta+|\eta|, 1+2\eta-|\eta|]$ --- thus when $\eta < 0$ it follows that $\mu\leq \mathbbm{1}_{[-1,1]}$ and when $\eta > 0$ it follows that $\mu\geq \mathbbm{1}_{[-1,1]}$. We may of course choose $\mu$ to be the standard bump function with these properties, in which case the $C^k$ norms of $\mu$ satisfy $||\mu||_{C^k(\R)}\ll_k \eta^{-k}$, but let us only specialize to this choice later.

Let $\alpha\in C_c^\infty(G(\R))$ and $\beta\in C_c^\infty(L)$ be compactly supported smooth functions such that: $\alpha$ is $\SO_2(\R)$-invariant, $\int_{G(\R)} \alpha = 1$, $\beta(F_{\pm}) = 1$, and $\supp{\beta}, \beta^{-1}(\{1\})\subseteq L^{(1)}$ are both unions of two small compact intervals respectively containing $F_\pm$. Let $G_0 := \supp{\beta}$. Let $$\phi(v) := \sum_{g\cdot v_L = v} \alpha(g)\cdot \beta(v_L).$$

Write $V(\Z)^{\Delta\neq 0} := \{F\in V(\Z) : \Delta(F)\neq 0\}$ and $$V(\Z)^{\mathrm{nontriv.}} := \{F\in V(\Z)^{\Delta\neq 0} : 0\not\in F(\P^1(\Q))\}.$$ (Here we deviate from Ruth, and indeed Bhargava \cite{chapterthree-bhargava-thesis} and Bhargava-Shankar \cite{chaptertwo-bhargava-shankar-2-selmer}, in using the superscript $\mathrm{nontriv.}$ instead of $\mathrm{irred.}$, since in our view it is misleading to call these irreducible.) That is, $V(\Z)^{\mathrm{nontriv.}}$ is the subset of $V(\Z)$ consisting of binary quartic forms with no root in $\P^1(\Q)$ --- i.e., those binary quartics that do not have a linear factor defined over $\Q$.

Write $$Y(\Z) := \{F\in V(\Z) : I(F) = 0, J(F)\neq 0\}$$ and $Y(\Z)^\nontriv := Y(\Z)\cap V(\Z)^\nontriv$. We note that our $Y(\Z)$ and $Y(\Z)^\nontriv$ play the role of Ruth's $Y$ and $Y^{\mathrm{irr.}}$, respectively. Similarly, for $M\in \Z^+$ and $F_0\in V(\Z/M)$, write $$Y_{F_0\pmods{M}}(\Z) := \{F\in Y(\Z) : F\equiv F_0\pmods{M}\}$$ and $Y_{F_0\pmods{M}}(\Z)^\nontriv := Y_{F_0\pmods{M}}(\Z)\cap V(\Z)^\nontriv$.

Write, for $S\subseteq V(\Z)$, $$\#_{\phi,\mu}|B(u, t, \lambda, X)\cap S| := \sum_{F\in S} \mu\left(\frac{J(F)}{X}\right)\cdot \phi(a_t^{-1}\cdot n_u^{-1}\cdot (\lambda\cdot \id)^{-1}\cdot F),$$ where we have disambiguated the action of $\lambda$ (which should really be the action of $\smalltwobytwo{\lambda}{0}{0}{\lambda}$) by writing $\lambda\cdot \id$ for clarity. We note that this is different from Ruth's (and indeed Bhargava's) notation precisely because we have smoothed using $\alpha$, $\beta$, and $\mu$ rather than simply $\mathbbm{1}_{G_0}$ --- though the reader may well imagine that $\phi\sim \mathbbm{1}_{G_0\cdot L}$, in which case $\#_{\phi,\mu}|B(u, t, \lambda, X)\cap S|\sim \#|\lambda\cdot n_u\cdot a_t\cdot G_0\cdot L\cap S|$.

Just as in Section $2.2$ of Ruth's \cite{chapterthree-ruth's-thesis}, we find that the problem reduces to controlling $\#_{\phi,\mu}|B(u, t, \lambda, X)\cap Y_{F_0\pmods{M}}(\Z)^\nontriv|$. We do this with the following two lemmas.\footnote{Note that Lemma \ref{the bulk estimate} matches Ruth's Proposition $2.3.1$ in form, except that we have included a congruence condition in order to sieve to locally soluble forms --- Ruth overlooks doing this in his circle method analysis. We note also that Ruth overlooks a factor of the form $\sigma_\infty(u, t, \lambda, X)$ (specifically he overlooks the dependence on both $\lambda$ and $X$: on page $12$ of \cite{chapterthree-ruth's-thesis} he states that the condition $|J(v)| < X$ is superfluous once $\lambda\ll X^{\frac{1}{24}}$, thus one can drop it --- this is false, since his definition of his $\mathcal{F}'$ depends on a parameter $C'$ and were this to be the case the Selmer average would also grow with $C'$, rather than be $3$ --- one can think of our smoothing as solving this issue. Let us however emphasize here, lest our phrasing suggest otherwise, that such oversights are not out of the ordinary in original works like Ruth's, and that we hope we have made clear the accuracy of the idea contained therein.).}

\begin{lem}[The "tail" estimate.]\label{the tail estimate}
Let $$\lambda\in \R^+, u\in \left[-\frac{1}{2}, \frac{1}{2}\right], \sqrt{\frac{\sqrt{3}}{2}}\leq t\ll \lambda.$$ Then: $$\#_{\phi,\mu}|B(u, t, \lambda, X)\cap Y(\Z)^\nontriv|\ll_\phi \lambda^{12 + o(1)}.$$
\end{lem}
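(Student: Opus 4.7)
The plan is to pointwise dominate $\phi$ by $C_\phi \cdot \mathbbm{1}_K$ for some compact set $K\subseteq V(\R)$ containing $\supp \phi$ and some constant $C_\phi$, reducing the weighted count to an unweighted lattice-point count:
$$\#_\phi|B(u,t,\lambda,X)\cap Y(\Z)^\nontriv| \ll_\phi \#\left|\lambda\cdot n_u\cdot a_t\cdot K\cap Y(\Z)^\nontriv\right|,$$
after discarding the condition $|J(F)|\leq X$ (which only enlarges the count). Computing the action of $\lambda n_u a_t$ on $V(\R)$ coordinate-wise, the translated box $\lambda n_u a_t\cdot K$ is contained in the set of $F = aX^4 + bX^3Y + cX^2Y^2 + dXY^3 + eY^4$ with
$$|a|\ll_K \lambda^4 t^{-4},\ |b|\ll_K \lambda^4 t^{-2},\ |c|\ll_K \lambda^4,\ |d|\ll_K \lambda^4 t^2,\ |e|\ll_K \lambda^4 t^4.$$

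The heart of the argument is a divisor bound applied to the equation $I(F) = 12ae - 3bd + c^2 = 0$. The key observation is that $F\in V(\Z)^\nontriv$ forces $a = F(1,0)\neq 0$ and $e = F(0,1)\neq 0$, so writing $N := 3bd - c^2$, the equation $12ae = N$ has nonzero left-hand side, forcing $N$ to be a \emph{nonzero} integer. Fixing the triple $(b,c,d)$, we have $|N|\ll \lambda^{O(1)}$, and the number of pairs $(a,e)\in (\Z-\{0\})^2$ solving $12ae = N$ is at most $2 d(|N|)\ll |N|^{o(1)}\ll \lambda^{o(1)}$ by the standard divisor bound (with the convention that this count is $0$ unless $12\,\vert\, N$, which only helps).

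To finish I bound the number of admissible triples $(b,c,d)$. Under the hypotheses $t\geq \sqrt{\sqrt{3}/2}$ and $t\ll \lambda$, for $\lambda$ sufficiently large each of the three side-lengths $\lambda^4 t^{-2}, \lambda^4, \lambda^4 t^2$ is $\gg 1$, so the number of lattice triples in the box is
$$\ll (\lambda^4 t^{-2})\cdot \lambda^4\cdot (\lambda^4 t^2) = \lambda^{12}.$$
Multiplying by the $\ll \lambda^{o(1)}$ choices for $(a,e)$ yields the claimed bound $\ll_\phi \lambda^{12+o(1)}$. I expect essentially no obstacle here: the entire argument is one application of the divisor bound, and the only mildly delicate point is verifying that the $\nontriv$ hypothesis forces $N\neq 0$, which is what legalizes the divisor-bound step at all. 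The bound is sharp up to the $\lambda^{o(1)}$ loss, which is exactly the slack the paper exploits: this loss will be absorbed by the $t^{-2}\, d^\times t$ factor in the Haar measure once the $\eps$-room in the hypothesis $t\gg_\eps \lambda^\eps$ is invoked in the global integration.
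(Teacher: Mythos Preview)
Your proof is correct and follows essentially the same approach as the paper: bound the coefficients of $F$ using the compact support of $\phi$, observe that $F\in V(\Z)^\nontriv$ forces $a,e\neq 0$ (so that $12ae = 3bd - c^2$ is nonzero), then count the $\ll \lambda^{12}$ triples $(b,c,d)$ and use the divisor bound to recover $(a,e)$ up to $\ll \lambda^{o(1)}$ choices. The paper's proof is literally this argument, phrased as ``the map $F\mapsto (b,c,d)$ has image $\ll \lambda^{12}$ and fibres $\ll \lambda^{o(1)}$.''
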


\begin{lem}[The "bulk" estimate.]\label{the bulk estimate}
Let $M\in \Z^+$ and $F_0\in V(\Z/M)$.
Let $$\lambda\in \R^+, u\in \left[-\frac{1}{2}, \frac{1}{2}\right], \sqrt{\frac{\sqrt{3}}{2}}\leq t\ll \lambda.$$ Then:
\begin{align*}
\#_{\phi,\mu}|B(u, t, \lambda, X)\cap Y_{F_0\pmods{M}}(\Z)| &= \sigma_\infty(u, t, \lambda, X)\cdot \prod_p  \sigma_p(Y_{F_0\pmods{M}}(\Z)) \\&\quad\quad + O_{\phi,M}(t^4\cdot \lambda^{8 + o(1)}) + O_{\phi, M, N}(||\mu||_{C^{O(N)}(\R)}\cdot t^N\cdot \lambda^{O(1) - N}),
\end{align*}\noindent
where $$\sigma_\infty(u, t, \lambda, X) := \lim_{\eps\to 0} \frac{\int_{v\in V(\R) : |I(v)|\leq \eps} dv\, \mu\left(\frac{J(v)}{X}\right)\cdot \phi(a_t^{-1}\cdot n_u^{-1}\cdot (\lambda\cdot \id)^{-1}\cdot v)}{2\eps}$$ and $$\sigma_p(Y_{F_0\pmods{M}}(\Z)) := \lim_{k\to \infty} p^{-4k}\cdot \#|\{F\in V(\Z/p^k) : I(F)\equiv 0\pmods{p^k}, F\equiv F_0\pmods{M}\}|.$$
\end{lem}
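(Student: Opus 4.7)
My plan is to apply Heath-Brown's smoothed delta symbol method to the equation $I(F) = 0$, treating $I$ as a non-degenerate quadratic form in five variables $F = (a,b,c,d,e)$, with the count restricted to the coset $F \equiv F_0 \pmod*{M}$. The preparatory step is to smooth the hard cutoff $\mathbbm{1}[|J(F)| \leq X]$ by a bump $\chi_X$ of boundary width $\eta X$ and send $\eta \to 0$ at the end; the boundary contribution is controlled by the obvious volume estimate near $|J|=X$ and is absorbed into the soft $\eps \to 0$ limit already present in the definition of $\sigma_\infty$. I then insert the identity $\mathbbm{1}[I(F)=0] = c_Q Q^{-2}\sum_{q}\sum_{(a,q)=1} e_q(a I(F)) h(q/Q, I(F)/Q^2)$ with $Q \asymp \lambda^4$ (matched to the typical size $I(F)\asymp \lambda^8$ on $\supp \phi$), and apply Poisson summation to the sum over $F \in F_0 + M\Z^5$ modulo $Mq$, producing a sum indexed by dual frequencies $\mathbf{c} \in \Z^5$ of the shape (complete exponential sum) $\times$ (oscillatory integral).

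The $\mathbf{c} = 0$ contribution yields the main term: the weighted $(q,a)$-sum is the Heath-Brown identity itself, which collapses back to the Dirac delta on $\{I=0\}$ in the real variable, so the oscillatory integral becomes $\sigma_\infty(u,t,\lambda,X)$ up to the normalization factor $1/M^5$, while the accompanying complete sums $T(q,0,F_0) = \sum_{F \pmod{Mq},\, F \equiv F_0(M)} c_q(I(F))$ factor by Chinese remaindering into an Euler product. Comparing the local factors at each $p$ to the limiting finite-level count $p^{-4k}\#\{F \in V(\Z/p^k) : I(F) \equiv 0 \pmod{p^k},\, F \equiv F_0 \pmod*{M}\}$ identifies them with $\sigma_p(Y_{F_0 \pmod*{M}}(\Z))$, yielding $\sigma_\infty(u,t,\lambda,X)\prod_p \sigma_p(Y_{F_0 \pmod*{M}}(\Z))$.

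The nonzero frequencies $\mathbf{c} \neq 0$ give the error. I would bound the complete sum by the standard Gauss-sum estimate $|T(q,\mathbf{c},F_0)| \ll q^{5/2 + o(1)} M^{O(1)}$ (valid because $I$ is non-degenerate mod $p$ for $p \neq 2, 3$, with the finitely many bad primes absorbed into the $O_M$ implicit constant), and bound the oscillatory integral by repeated integration by parts in the smooth weight $\phi(a_t^{-1} n_u^{-1} \lambda^{-1} F) \chi_X(J(F)) h(q/Q, I(F)/Q^2)$. Since this weight is supported in a box of scale $\lambda^4 t^{2i-4}$ in the $i$th coordinate ($i = 0,\ldots,4$), each integration by parts in the $i$th coordinate saves a factor $(\lambda^4 t^{2i-4}|c_i|/(Mq))^{-1}$; unbounded integration by parts in the narrowest direction produces the rapid-decay tail $O_N(t^N \lambda^{O(1)-N})$, and after accounting for the surviving low-frequency terms against the Gauss-sum bound and the $Q^{-2} = \lambda^{-8}$ normalization in Heath-Brown's identity, one extracts the circle-method error $O(t^4 \lambda^{8+o(1)})$.

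The main obstacle, and the source of the $t^4$ in the error bound, is the careful bookkeeping of the skewed smooth weight against the Gauss sums: one must verify that the Poisson savings from the widest direction (scale $\lambda^4 t^4$, where $e$ lives) dominate the losses in the narrowest direction (scale $\lambda^4/t^4$, where $a$ lives) by precisely enough to leave a final bound proportional only to $t^4$ rather than something worse. A secondary nuisance is the explicit identification of $\sigma_p$ at the bad primes $p \mid M$ and $p \in \{2,3\}$ where $I$ mod $p$ is degenerate; but these are finitely many primes whose contribution is controlled trivially and subsumed into the $O_{\phi,M}$ implicit constant.
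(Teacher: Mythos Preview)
Your approach is essentially the paper's: Heath-Brown's smoothed delta method with $Q\asymp\lambda^4$, Poisson over the coset $F_0+M\Z^5$, the $\mathbf{c}=0$ term yielding $\sigma_\infty\cdot\prod_p\sigma_p$ (with the same $M^{-5}$ bookkeeping and Chinese-remainder identification of the local densities at $p\mid M$ that you sketch), and the $\mathbf{c}\neq 0$ terms controlled by exponential-sum bounds against repeated integration by parts in the skewed box.

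Two points of divergence worth noting. First, your preliminary smoothing of the sharp cutoff $|J|\le X$ is not in the paper and is not needed: the paper applies Heath-Brown's Theorem~2 directly to the smooth weight $\phi$ and then cites Heath-Brown's own main-term analysis; the $J$-constraint is carried along inside $\sigma_\infty$. Second, your stated bound $|T(q,\mathbf{c},F_0)|\ll q^{5/2+o(1)}$ is too optimistic --- already for $q=p$ prime with $p>3$ the sum $\sum_{u\in(\Z/p)^\times}\sum_{G}e_p(uI(G)+\mathbf{c}\cdot G)$ has size $p^3$, not $p^{5/2}$. The paper instead uses the generic bound $q^{7/2+o(1)}$ together with the observation (specific to an \emph{odd} number of variables) that this sum vanishes whenever $q$ has a prime factor $p>3$ appearing to exactly the first power, so that only essentially powerful moduli survive in the $q$-sum; it is this extra thinning that brings the error down to $t^{O(1)}\lambda^{8+o(1)}$. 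With only your generic Gauss-sum bound (even once corrected) you would land on a weaker error exponent in $\lambda$, still sufficient for the downstream application but not matching the lemma as stated.
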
\noindent
We have written $\sigma_\infty(u, t, \lambda, X)$ despite the function being independent of $u$ and $t$ (via $v\mapsto n_u\cdot a_t\cdot v$) for notational convenience.

Just as in e.g.\ Theorems $2.12$ and $2.21$ of Bhargava-Shankar's \cite{chaptertwo-bhargava-shankar-2-selmer}, we must introduce a weight function $\uglyphi: V(\Z/M)\to \R$ (which will eventually be taken to be a majorant of, in their notation, $f\mapsto \frac{1}{m(f)}$) with $M$ highly divisible. The following weighted version of Lemma \ref{the bulk estimate} of course follows immediately from Lemma \ref{the bulk estimate}.

\begin{lem}\label{the bulk estimate, weighted}
Let $M\in \Z^+$ and $\uglyphi: V(\Z/M)\to \R$.
Let $$\lambda\in \R^+, u\in \left[-\frac{1}{2}, \frac{1}{2}\right], \sqrt{\frac{\sqrt{3}}{2}}\leq t\ll \lambda.$$ Then:
\begin{align*}
&\sum_{F_0\in V(\Z/M)} \uglyphi(F_0)\cdot \#_{\phi,\mu}|B(u, t, \lambda, X)\cap Y_{F_0\pmods{M}}(\Z)|
\\&= \sigma_\infty(u, t, \lambda, X)\cdot \prod_{p\nmid M}  \sigma_p(Y(\Z))\cdot \lim_{k\to \infty} M^{-4k}\cdot \sum_{F\in V(\Z/M^k): I(F)\equiv 0\pmods{M^k}} \uglyphi(F\pmods{M})\\&\quad\quad + O_{\phi,M}(||\uglyphi||_1\cdot t^4\cdot \lambda^{8 + o(1)}) + O_{\phi,M,N}(||\uglyphi||_1\cdot ||\mu||_{C^{O(N)}(\R)}\cdot t^N\cdot \lambda^{O(1) - N}).
\end{align*}
\end{lem}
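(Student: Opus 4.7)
The plan is to obtain the weighted estimate by multiplying the unweighted estimate of Lemma \ref{the bulk estimate} through by $\uglyphi(F_0)$ and summing over $F_0\in V(\Z/M)$. Since the asserted identity is linear in the congruence class $F_0$, and since Lemma \ref{the bulk estimate} is uniform in $F_0$ (the implied constants depending only on $\phi$, $M$, and $N$), this direct approach is essentially a bookkeeping exercise.

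For the error terms, since the bounds $O_{\phi,M}(t^4\cdot \lambda^{8+o(1)})$ and $O_{\phi,M,N}(t^N\cdot \lambda^{O(1)-N})$ in Lemma \ref{the bulk estimate} are independent of $F_0$, multiplying by $\uglyphi(F_0)$ and summing in absolute value immediately pulls out a factor of $\sum_{F_0\in V(\Z/M)} |\uglyphi(F_0)| = \|\uglyphi\|_1$, producing exactly the error terms in the statement.

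The only real content is the reorganization of the main term. Starting from $\sum_{F_0} \uglyphi(F_0)\cdot \sigma_\infty(u,t,\lambda,X)\cdot \prod_p \sigma_p(Y_{F_0\pmod*{M}}(\Z))$, I would split the Euler product at primes $p\nmid M$ versus $p\mid M$. For $p\nmid M$, the congruence $F\equiv F_0\pmod*{M}$ imposes no condition on the $p$-adic behavior, so $\sigma_p(Y_{F_0\pmod*{M}}(\Z))=\sigma_p(Y(\Z))$, and this factor pulls out of the sum over $F_0$. For $p\mid M$, writing $M=\prod_{p\mid M} p^{m_p}$, the Chinese Remainder Theorem identifies $V(\Z/M^k)\simeq \prod_{p\mid M} V(\Z/p^{k m_p})$ compatibly with the conditions $I(F)\equiv 0$ and $F\equiv F_0\pmod*{M}$, giving
\begin{align*}
\prod_{p\mid M} \sigma_p(Y_{F_0\pmod*{M}}(\Z)) = \lim_{k\to \infty} M^{-4k}\cdot \#|\{F\in V(\Z/M^k) : I(F)\equiv 0\pmod*{M^k}, F\equiv F_0\pmod*{M}\}|.
\end{align*}
Summing $\uglyphi(F_0)$ against this expression collapses the condition $F\equiv F_0\pmod*{M}$ into the weight $\uglyphi(F\pmod*{M})$, yielding the displayed main term.

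The only point requiring mild care is verifying that the Chinese Remainder Theorem identification is compatible with the normalization $M^{-4k}$ and with the limit as $k\to \infty$; this is immediate because $\dim V = 5$ contributes the exponent $4$ (since $I(F)\equiv 0\pmod*{M^k}$ cuts out one equation) uniformly at each prime, and the limit can be interchanged with the finite product over $p\mid M$. No obstacle beyond routine unwinding of definitions is anticipated.
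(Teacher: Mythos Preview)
Your proposal is correct and follows precisely the approach the paper indicates: the paper states only that this lemma ``of course follows immediately from Lemma \ref{the bulk estimate},'' and your argument---multiplying the unweighted estimate by $\uglyphi(F_0)$, summing over $F_0\in V(\Z/M)$, pulling out $\|\uglyphi\|_1$ in the error terms, and reorganizing the main term via the Chinese Remainder Theorem to separate $p\mid M$ from $p\nmid M$---is exactly the intended unwinding. The CRT manipulation you describe is the same one carried out (in a slightly different context) at the end of the paper's proof of Lemma \ref{the bulk estimate}.
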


We note that the $p$-adic local densities are of course exact analogues of the singular integral at infinity --- one thickens $p$-adically by forcing only $I(F)\equiv 0\pmods{p^k}$, and then one takes a limit. It is because of this thickening that we easily reduce the calculation of the constants to results of Bhargava-Shankar.

\subsection{Avoiding a calculation of local densities.}

Let us first deduce Theorem \ref{all six families} for $k\equiv 1\pmods{6}$ from Lemmas \ref{the tail estimate} and \ref{the bulk estimate}.

\begin{proof}[Proof of Theorem \ref{all six families} for $k\equiv 1\pmods{6}$ assuming Lemmas \ref{the tail estimate} and \ref{the bulk estimate}.]
We reduce immediately to the case of $\mathcal{B}$ defined by finitely many congruence conditions. Indeed, assume Theorem \ref{all six families} for $k\equiv 1\pmods{6}$ for nonempty subsets of $\Z - \{0\}$ defined by finitely many congruence conditions. Writing $\mathcal{B}_p$ for the closure of $\mathcal{B}$ in $\Z_p$, and $\mathcal{B}_{\leq T} := \{n\in \Z - \{0\} : \forall p\leq T, n\in \mathcal{B}_p\}$ (thus $\mathcal{B}\subseteq \mathcal{B}_{\leq T}$), we find that, assuming Theorem $3.1.1$ for sets defined by finitely many congruence conditions (and thus for $\mathcal{B}_{\leq T}$):
\begin{align*}
&\sum_{B\in \mathcal{B} : |B|\leq X} \#|\Sel_2(E_{0,B}/\Q)|
\\&\leq \sum_{B\in \mathcal{B}_{\leq T} : |B|\leq X} \#|\Sel_2(E_{0,B}/\Q)|
\\&\leq (3 + O_{\mathcal{B}, T}(o_{X\to \infty}(1)))\cdot \#|\{n\in \mathcal{B}_{\leq T} : |n|\leq X\}|
\\&= (3 + O_{\mathcal{B}, T}(o_{X\to \infty}(1)))\cdot \left(\frac{\#|\{n\in \mathcal{B}_{\leq T} : |n|\leq X\}|}{\#|\{n\in \mathcal{B} : |n|\leq X\}|}\right)\cdot \#|\{n\in \mathcal{B} : |n|\leq X\}|
\\&= \left(3 + O_{\mathcal{B}}(o_{T\to \infty}(1)) + O_{\mathcal{B},T}(o_{X\to \infty}(1))\right)\cdot \#|\{n\in \mathcal{B} : |n|\leq X\}|.
\end{align*}\noindent
Taking $X\to \infty$ and then $T\to \infty$ gives that $$\Avg_{n\in \mathcal{B} : |n|\leq X} \#|\Sel_2(E_{0,B}/\Q)|\leq 3 + O_{\mathcal{B}}(o_{X\to \infty}(1)),$$ as desired.

Thus without loss of generality $\mathcal{B}$ is defined by finitely many congruence conditions, i.e.\ it is of the form $\mathcal{B} = \{n\in \Z - \{0\} : n\equiv a\pmod{m}\}$ for $m\in \Z^+$ and $a\in \Z/m$.

We will appeal to Ruth's Lemma $2.2.3$ (proven in Section $4.3$ of his \cite{chapterthree-ruth's-thesis} using an analysis of the (monogenized) cubic resolvent ring arising from a binary quartic form) to use the equality $n(F) = m(F)$ outside a negligible set. Here we use the notation of Section $3.2$ of Bhargava-Shankar's \cite{chaptertwo-bhargava-shankar-2-selmer}: $n(F)$ is the number of $\PGL_2(\Z)$-orbits in the (intersection of $V(\Z)$ and the) $\PGL_2(\Q)$-orbit of $F\in V(\Z)^{\Delta\neq 0}$, and
\begin{align*}
m(F) &:= \sum_{F'\in \PGL_2(\Z)\backslash (V(\Z)\cap \PGL_2(\Q)\cdot F)} \frac{\#|\Aut_{\PGL_2(\Q)}(F')|}{\#|\Aut_{\PGL_2(\Z)}(F')|}
\\&= \prod_p \sum_{F'\in \PGL_2(\Z_p)\backslash (V(\Z_p)\cap \PGL_2(\Q_p)\cdot F)} \frac{\#|\Aut_{\PGL_2(\Q_p)}(F')|}{\#|\Aut_{\PGL_2(\Z_p)}(F')|}
\\&=: \prod_p m_p(F).
\end{align*}

Let $M\in \Z^+$ with $m\vert M$. Let $\uglyphi: V(\Z/M)\to \R$. We will eventually take a sequence of majorants $\uglyphi_n$ of $F\mapsto \frac{1}{m(F)}$ --- i.e.\  $\uglyphi_1(F)\geq \cdots\geq \uglyphi_n(F)\geq \cdots\geq \frac{1}{m(F)}$ for all $F\in V(\Z)^{\Delta\neq 0}$ --- and apply the below to $\uglyphi_n$ and take $n\to \infty$.

As we have seen in e.g.\ Section \ref{smoothing section},
\begin{align*}
&N_\mu(Y_{F_0\pmods{M}}(\Z)^\nontriv, X)
\\&= \int_{\mathcal{F}} dh\, \sum_{v\in V(\Z)^\nontriv : I(v) = 0, F\equiv F_0\pmods{M}} \mu\left(\frac{J(v)}{X}\right)\cdot \phi(h^{-1}\cdot v)
\\&= \int_{1\ll \lambda\ll X^{\frac{1}{24}}} d^\times \lambda \int_{u\in \nu(t)} du \int_{\sqrt{\frac{\sqrt{3}}{2}}\leq t\ll \lambda} t^{-2} d^\times t\, \#_{\phi,\mu}|B(u, t, \lambda, X)\cap Y_{F_0\pmods{M}}(\Z)^\nontriv|
\\&= \int_{1\ll \lambda\ll X^{\frac{1}{24}}} d^\times \lambda \int_{u\in \nu(t)} du \int_{\sqrt{\frac{\sqrt{3}}{2}}\leq t\ll \lambda^\delta} t^{-2} d^\times t\, \#_{\phi,\mu}|B(u, t, \lambda, X)\cap Y_{F_0\pmods{M}}(\Z)^\nontriv|
\\&\quad + \int_{1\ll \lambda\ll X^{\frac{1}{24}}} d^\times \lambda \int_{|u|\leq \frac{1}{2}} du \int_{\lambda^\delta\ll t\ll \lambda} t^{-2} d^\times t\, \#_{\phi,\mu}|B(u, t, \lambda, X)\cap Y_{F_0\pmods{M}}(\Z)^\nontriv|,
\\&= \int_{1\ll \lambda\ll X^{\frac{1}{24}}} d^\times \lambda \int_{u\in \nu(t)} du \int_{\sqrt{\frac{\sqrt{3}}{2}}\leq t\ll \lambda^\delta} t^{-2} d^\times t\, \begin{pmatrix} \sigma_\infty(u, t, \lambda, X)\cdot \prod_p \sigma_p(Y_{F_0\pmods{M}}(\Z)) \\+\, O_{\phi,M}(||\mu||_{C^{O(1)}(\R)}\cdot t^4\cdot \lambda^{8 + o(1)})\end{pmatrix}
\\&\quad + \int_{1\ll \lambda\ll X^{\frac{1}{24}}} d^\times \lambda \int_{|u|\leq \frac{1}{2}} du \int_{\lambda^\delta\ll t\ll \lambda} t^{-2} d^\times t\,\, O_\phi(\lambda^{12 + o(1)})
\end{align*}\noindent
by definition, by splitting the integral, and then by Lemmas \ref{the tail estimate} and \ref{the bulk estimate}.

Next we pull out the product of local densities at finite primes in order to calculate the integral. We find that:
\begin{align*}
&N_\mu(Y_{F_0\pmods{M}}(\Z)^\nontriv, X)
\\&= \prod_p \sigma_p(Y_{F_0\pmods{M}}(\Z))\cdot \int_{1\ll \lambda\ll X^{\frac{1}{24}}} d^\times \lambda \int_{u\in \nu(t)} du \int_{\sqrt{\frac{\sqrt{3}}{2}}\leq t\ll \lambda^\delta} t^{-2} d^\times t\, \sigma_\infty(u, t, \lambda, X)
\\&\quad + O_{\phi,M}(||\mu||_{C^{O(1)}}(\R)\cdot X^{1 - \Omega(\delta) + o(1)})
\\&= \prod_p \sigma_p(Y_{F_0\pmods{M}}(\Z))\cdot \lim_{\eps\to 0} (2\eps)^{-1}\cdot \int_{\mathcal{F}} d^\times \lambda\, du\, t^{-2} d^\times t \int_{v\in V(\R) : |I(v)|\leq \eps} dv\, \mu\left(\frac{J(v)}{X}\right)\cdot \phi((\lambda\cdot a_t\cdot n_u)^{-1}\cdot v)
\\&\quad + O_{\phi,M}(||\mu||_{C^{O(1)}}(\R)\cdot X^{1 - \Omega(\delta) + o(1)})
\\&= \prod_p \sigma_p(Y_{F_0\pmods{M}}(\Z))\cdot \lim_{\eps\to 0} (2\eps)^{-1}\cdot \int_{v\in V(\R) : |I(v)|\leq \eps} dv\, \mu\left(\frac{J(v)}{X}\right) \int_{h\in \mathcal{F}} dh\, \phi(h^{-1}\cdot v)
\\&\quad + O_{\phi,M}(||\mu||_{C^{O(1)}}(\R)\cdot X^{1 - \Omega(\delta) + o(1)}),
\end{align*}\noindent
by definition of $\sigma_\infty$ and then by letting $h := (\lambda\cdot \id)\cdot a_t\cdot n_u$ and recalling that $\phi$ is $\SO_2(\R)$-invariant.

Recall that we saw in Section \ref{smoothing section} that, for $v\in V(\R)^{\Delta\neq 0}$ with $I(v) = 0$ (recall that then $\beta(v_L) = 1$), $$\int_{h\in \mathcal{F}} dh\, \phi(h^{-1}\cdot v) = \int_{G(\R)} dh\, \alpha(h)\cdot \#|\{g\in \mathcal{F} : gh\cdot v_L = v\}|.$$ Because we arranged that $\beta = 1$ on sufficiently small intervals around $F_\pm$, the same identity holds for $v\in V(\R)^{\Delta\neq 0}$ with $|I(v)|\leq \eps$ and $|J(v)|\gg 1$ when $\eps$ is sufficiently small. Inserting this into the above, we find that the main term is:
\begin{align*}
&(1 + O_M(X^{-1}))\cdot N_\mu(Y_{F_0\pmods{M}}(\Z)^\nontriv, X)
\\&= \prod_p \sigma_p(Y_{F_0\pmods{M}}(\Z))\\&\quad\quad\cdot \lim_{\eps\to 0} (2\eps)^{-1}\cdot \int_{v\in V(\R) : |I(v)|\leq \eps, |J(v)|\gg 1} dv\, \mu\left(\frac{J(v)}{X}\right) \int_{G(\R)} dh\, \alpha(h)\cdot \#|\{g\in \mathcal{F} : gh\cdot v_L = v\}|
\\&= \prod_p \sigma_p(Y_{F_0\pmods{M}}(\Z))\\&\quad\quad\cdot \lim_{\eps\to 0} (2\eps)^{-1}\cdot \int_{G(\R)} dh\, \alpha(h) \int_{v\in V(\R) : |I(v)|\leq \eps, |J(v)|\gg 1} dv\, \mu\left(\frac{J(v)}{X}\right) \#|\{g\in \mathcal{F} : gh\cdot v_L = v\}|
\\&= \prod_p \sigma_p(Y_{F_0\pmods{M}}(\Z))\cdot \lim_{\eps\to 0} (2\eps)^{-1}\cdot \int_{G(\R)} dh\, \alpha(h) \int_{v\in \mathcal{F}\cdot h\cdot L : |I(v)|\leq \eps, |J(v)|\gg 1} dv\, \mu\left(\frac{J(v)}{X}\right),
\end{align*}\noindent
by switching integrals and then noting that $\mathcal{F}\cdot h$ is a fundamental domain for the action of $G(\Z)\actson G(\R)$ (and using $\int_{G(\R)} \alpha = 1$). Note also that the factor $(1 + O_M(X^{-1}))$ in the first line comes from the fact that we have thrown out the $v\in V(\R)^{\Delta\neq 0}$ with $|I(v)|\leq \eps$ and $|J(v)|\ll 1$ in order to use the equality $\int_{h\in \mathcal{F}} dh\, \phi(h^{-1}\cdot v) = \int_{G(\R)} dh\, \alpha(h)\cdot \#|\{g\in \mathcal{F} : gh\cdot v_L = v\}|$.

Now the inner integral is independent of the choice of fundamental domain, whence:
\begin{align*}
&(1 + O_M(X^{-1}))\cdot N_\mu(Y_{F_0\pmods{M}}(\Z)^\nontriv, X)
\\&= \prod_p \sigma_p(Y_{F_0\pmods{M}}(\Z))\cdot \lim_{\eps\to 0} (2\eps)^{-1}\cdot \int_{v\in \mathcal{F}\cdot L : |I(v)|\leq \eps, |J(v)|\gg 1} dv\, \mu\left(\frac{J(v)}{X}\right)
\\&= 2X\cdot \prod_p \sigma_p(Y_{F_0\pmods{M}}(\Z))\cdot \lim_{\eps\to 0} (4\eps\cdot X)^{-1}\cdot \int_{v\in \mathcal{F}\cdot L : |I(v)|\leq \eps, |J(v)|\gg 1} dv\, \mu\left(\frac{J(v)}{X}\right)
\\&= \frac{2X}{m}\cdot \left(m\cdot\prod_p \sigma_p(Y_{F_0\pmods{M}}(\Z))\right)\cdot \lim_{\eps\to 0} (4\eps\cdot X)^{-1}\cdot \int_{v\in \mathcal{F}\cdot L : |I(v)|\leq \eps, |J(v)|\gg 1} dv\, \mu\left(\frac{J(v)}{X}\right),
\end{align*}\noindent

Now let us evaluate the remaining integral. By Proposition $2.8$ of Bhargava-Shankar's \cite{chaptertwo-bhargava-shankar-2-selmer} (we could avoid using this to get from the leftmost to the rightmost terms in the equality, of course), we have that
\begin{align*}
&(4\eps\cdot X)^{-1}\cdot \int_{v\in \mathcal{F}\cdot L : |I(v)|\leq \eps, |J(v)|\gg 1} dv\, \mu\left(\frac{J(v)}{X}\right)
\\&= (1 + O(\eta))\cdot (4\eps\cdot X)^{-1}\cdot \int_{v\in \mathcal{F}\cdot L : |I(v)|\leq \eps, 1\ll |J(v)|\leq (1 + O(\eta))\cdot X} dv
\\&= \frac{2\zeta(2)}{27}\cdot (1 + O(\eta + \eps\cdot X^{-1}))
\\&= (1 + O(\eta + \eps\cdot X^{-1}))\cdot \frac{\int_{\coprod_i \mathcal{R}^{(i)}(X)} dv}{\int_{\coprod_i R^{(i)}(X)} dI dJ},
\end{align*}\noindent
using the notation of Section $2.4$ of Bhargava-Shankar's \cite{chaptertwo-bhargava-shankar-2-selmer}. This concludes the first step in the trick of reducing the calculation of the product of local densities to the one done in Bhargava-Shankar --- at the moment, we have only dealt with the Archimedean restrictions.

Thus so far we have found that:
$$N_\mu(Y_{F_0\pmods{M}}(\Z)^\nontriv, X) = (1 + O_M(X^{-1}))\cdot \frac{2X}{m}\cdot \left(m\cdot \prod_p \sigma_p(Y_{F_0\pmods{M}}(\Z))\right)\cdot \frac{\int_{\coprod_i \mathcal{R}^{(i)}(X^2/4)} dv}{\int_{\coprod_i R^{(i)}(X^2/4)} dI dJ}.$$ Note that $$\#|\{J\in \mathcal{B} : 0\neq |J|\leq X\}| = \frac{2X}{m}\cdot \left(1 + O\left(\frac{m}{X}\right)\right).$$

Just as in the passage from Lemma \ref{the bulk estimate} to Lemma \ref{the bulk estimate, weighted}, we find that:
\begin{align*}
&\sum_{F_0\in V(\Z/M) : J(F_0)\equiv a\pmods{m}} \uglyphi(F_0)\cdot N_\mu(Y_{F_0\pmods{M}}(\Z)^\nontriv, X)
\\&= (1 + O(\eta) + O_M(X^{-1}))\cdot \frac{2X}{m}\cdot \frac{\int_{\coprod_i \mathcal{R}^{(i)}(X^2/4)} dv}{\int_{\coprod_i R^{(i)}(X^2/4)} dI dJ}\\&\quad\quad\cdot \left(m\cdot \lim_{T\to \infty} n_T^{-4}\cdot \sum_{F\in V(\Z/n_T) : I(F)\equiv 0\pmods{n_T}, J(F)\equiv a\pmods{m}} \uglyphi(F\pmods{M})\right),
\end{align*}\noindent
where $n_T := \prod_{p\leq T} p^T$, say, and without loss of generality $T\geq M$ so that $M\vert n_T$.

Choose now $\eta := X^{-\delta^2}$ and $\mu$ to be the standard bump function with the given properties, whence $||\mu||_{C^{O(1)}(\R)}\ll X^{O(\delta^2)}$.

Write $\mathbbm{1}_{\mathrm{sol.}}^{(p)}$ for the indicator function of the $\Q_p$-soluble binary quartic forms $F\in V(\Z_p)$ (that is to say, those $F$ for which $Z^2 = F(X,Y)$ admits a nonzero solution with all coordinates in $\Q_p$). Write $\mathbbm{1}_{\mathrm{loc.\ sol.}} := \prod_p \mathbbm{1}_{\mathrm{sol.}}^{(p)}.$ Write $$\uglyphi_*(F) := \frac{\mathbbm{1}_{\mathrm{loc.\ sol.}}}{m(F)} = \prod_p \frac{\mathbbm{1}_{\mathrm{sol.}}^{(p)}}{m_p(F)} =: \prod_p \uglyphi_*^{(p)}(F)$$ on $V(\Z)^{\Delta\neq 0}$. Thus when $1 =: \uglyphi_0^{(p)}(F)\geq \uglyphi_1^{(p)}(F)\geq \cdots\geq \uglyphi_n^{(p)}(F)\geq \cdots \geq \uglyphi_*^{(p)}(F)$ for all $F\in V(\Z_p)^{\Delta\neq 0}$ with $\uglyphi_n^{(p)}: V(\Z_p)^{\Delta\neq 0}\to [0,1]$ factoring through $V(\Z/p^n)$ (and not $V(\Z/p^{n-1})$) and such that $\uglyphi_n^{(p)}(F)\to \uglyphi_*^{(p)}(F)$ as $n\to \infty$, we find that\footnote{Of course one always has that, for a convergent sequence $x_k\in \R$, $\lim_{k\to \infty} x_k = x_k + o_{k\to \infty}(1)$, but here we have written $\lim_{k\to \infty} x_k = (1 + o_{k\to \infty}(1))\cdot x_k$, which is only justified (for $k$ sufficiently large) when $\lim_{k\to \infty} x_k\neq 0$. While we will see that the relevant limit is $2$, technically we are not yet justified in doing this and should carry the various additive error terms $o_{k\to \infty}(1)$ through the argument. However we hope the reader will grant us this notational simplification, since it makes no difference to the argument.}, writing $$\uglyphi_n := \prod_{p\leq n} \uglyphi_n^{(p)},$$ (and here is where we use Lemma $2.2.3$ of Ruth's \cite{chapterthree-ruth's-thesis} to replace $m(F)$ by $n(F)$ for all but $\ll X^{\frac{5}{6}}$ forms):
\begin{align*}
&\Avg_{B\in \mathcal{B} : |B|\leq X} \#|\Sel_2(E_{0,B}/\Q) - \{0\}|\\&\leq (1 + O_T(o_{X\to \infty}(1)))\cdot (1 + o_{T\to \infty}(1))\\&\quad\quad\cdot \frac{\int_{\coprod_i \mathcal{R}^{(i)}(X^2/4)} dv}{\int_{\coprod_i R^{(i)}(X^2/4)} dI dJ}\cdot \left(m\cdot n_T^{-4}\cdot \sum_{F\in V(\Z/n_T) : I(F)\equiv 0\pmods{n_T}, J(F)\equiv a\pmods{m}} \uglyphi_T(F)\right).
\end{align*}

Note that we have "thickened" by changing the constraint $I(F) = 0$ to $I(F)\equiv 0\pmods{n_T}$. Accordingly, we write $$\Inv^{(T, \mathcal{B})} := \{(I,J)\in \Z^2 : I\equiv 0\pmods{n_T}, J\equiv a\pmods{m}, 4I^3 - J^2\neq 0\},$$ and $\Inv_p^{(T, \mathcal{B})}\subseteq \Z_p^2$ for its closure in $\Z_p^2$.

The trick is now to notice that, for all $k\in \Z^+$,
\begin{align*}
&\sum_{F\in V(\Z/n_T) : I(F)\equiv 0\pmods{n_T}, J(F)\equiv a\pmods{m}} \uglyphi_T(F)
\\&= n_T^{-5\cdot (k-1)}\cdot \sum_{F\in V(\Z/n_T^k) : I(F)\equiv 0\pmods{n_T}, J(F)\equiv a\pmods{m}} \uglyphi_T(F),
\end{align*}\noindent
so that:
\begin{align*}
&m\cdot n_T^{-4}\cdot \sum_{F\in V(\Z/n_T) : I(F)\equiv 0\pmods{n_T}, J(F)\equiv a\pmods{m}} \uglyphi_T(F)
\\&= \frac{n_T^{-5k}\cdot \sum_{F\in V(\Z/n_T^k) : I(F)\equiv 0\pmods{n_T}, J(F)\equiv a \pmods{m}} \uglyphi_T(F)}{n_T^{-2k}\cdot \#|\{(I,J)\in V(\Z/n_T^k) : I\equiv 0\pmods{n_T}, J\equiv a\pmods{m}\}|}.
\end{align*}

Now we note that
\begin{align*}
&\lim_{k\to \infty} n_T^{-5k}\cdot \sum_{F\in V(\Z/n_T^k) : I(F)\equiv 0\pmods{n_T}, J(F)\equiv a\pmods{m}} \uglyphi_T(F) \\&= \int_{F\in V\left(\prod_{p\leq T} \Z_p\right) : (I(F), J(F))\in \prod_{p\leq T} \Inv_p^{(T, \mathcal{B})}} dF\, \uglyphi_T(F),
\end{align*}\noindent
and so, by dominated convergence and Fubini (note that we are implicitly using Proposition $3.18$ of Bhargava-Shankar's \cite{chaptertwo-bhargava-shankar-2-selmer}, and indeed arguing as in their proof of Proposition $2.21$ of their \cite{chaptertwo-bhargava-shankar-2-selmer}), it follows that\footnote{See the previous footnote (about factoring out $(1 + o_{k\to \infty}(1))$ and $(1 + o_{T\to \infty}(1))$).}
\begin{align*}
&n_T^{-5k}\cdot \sum_{F\in V(\Z/n_T) : I(F)\equiv 0\pmods{n_T}, J(F)\equiv a\pmods{m}} \uglyphi_T(F)
\\&= (1 + o_{T\to \infty}(1))\cdot (1 + O_T(o_{k\to \infty}(1)))
\\&\quad\quad\cdot \prod_p \int_{v\in V(\Z_p) : (I(v), J(v))\in \Inv_p^{(T, \mathcal{B})}} dv\, \uglyphi_*^{(p)}(v).
\end{align*}

Similarly, we of course have:
\begin{align*}
&n_T^{-2k}\cdot \#|\{(I,J)\in V(\Z/n_T^k) : I\equiv 0\pmods{n_T}, J\equiv a\pmods{m}\}|
\\&= \prod_{p\leq T} \int_{(I,J)\in \Inv_p^{(T, \mathcal{B})}} dI dJ
\\&= (1 + o_{T\to \infty}(1))\cdot \prod_p \int_{(I,J)\in \Inv_p^{(T, \mathcal{B})}} dI dJ.
\end{align*}

Combining all these, we find that:
\begin{align*}
&\Avg_{B\in \mathcal{B} : |B|\leq X} \#|\Sel_2(E_{0,B}/\Q) - \{0\}|\\&\leq (1 + O_T(o_{X\to \infty}(1)))\cdot (1 + o_{T\to \infty}(1))\cdot (1 + O_T(o_{k\to \infty}(1)))\\&\quad\quad\cdot \frac{\int_{\coprod_i \mathcal{R}^{(i)}(X^2/4)} dv}{\int_{\coprod_i R^{(i)}(X^2/4)} dI dJ}\cdot \frac{\prod_p \int_{v\in V(\Z_p) : (I(v), J(v))\in \Inv_p^{(T, \mathcal{B})}} dv\, \uglyphi_*^{(p)}(v)}{\prod_p \int_{(I,J)\in \Inv_p^{(T, \mathcal{B})}} dI dJ}.
\end{align*}

But the calculations in Section $3.6$ of Bhargava-Shankar's \cite{chaptertwo-bhargava-shankar-2-selmer} amount to the statement that $$\frac{\int_{\coprod_i \mathcal{R}^{(i)}(X^2/4)} dv}{\int_{\coprod_i R^{(i)}(X^2/4)} dI dJ}\cdot \frac{\prod_p \int_{v\in V(\Z_p) : (I(v), J(v))\in \Inv_p^{(T, \mathcal{B})}} dv\, \uglyphi_*^{(p)}(v)}{\prod_p \int_{(I,J)\in \Inv_p^{(T, \mathcal{B})}} dI dJ} = 2.$$ Taking $k\to \infty$, then $X\to \infty$, and finally $T\to \infty$, we deduce Theorem \ref{all six families} for $k\equiv 1\pmods{6}$.
\end{proof}

\subsection{The uniformity estimate.\label{the uniformity estimate for mordell curves section}}

In order to prove the matching lower bound (recall that we are in the special case where $\mathcal{B}\subseteq \Z - \{0\}$ is defined by finitely many congruence conditions) we simply run the above argument with minorants instead --- that is to say, we instead take $\eta := -X^{-\delta^2}$, $\mu$ again the standard bump function with the given properties, and $0 =: \uglyphi_0^{(p)}(F)\leq \uglyphi_1^{(p)}(F)\leq \cdots\leq \uglyphi_n^{(p)}(F)\leq \cdots\leq \uglyphi_*^{(p)}(F)$ for all $F\in V(\Z_p)^{\Delta\neq 0}$, with $\uglyphi_n: V(\Z_p)^{\Delta\neq 0}\to [0,1]$ factoring through $V(\Z/p^n)$ (and not $V(\Z/p^{n-1})$) and such that $\uglyphi_n^{(p)}(F)\to \uglyphi_*^{(p)}(F)$ as $n\to \infty$. The argument is precisely the same, with the exception of the first step.

Specifically, when proving the upper bound we implicitly used that the binary quartics $F\in V(\Z)^\nontriv$ with $I(F) = 0$ representing $2$-Selmer classes of $E_{0,J(F)}/\Q$ --- i.e.\ such that $Z^2 = F(X,Y)$ is nontrivially soluble in all completions of $\Q$ --- are in particular locally soluble at those $p\leq T$. However of course the converse does not hold. Similarly, in order to deduce a lower bound for the average of $\uglyphi_*$ we must have that $\uglyphi_T\leq \uglyphi_*$ outside a set of negligible size. So, just as in Bhargava-Shankar's \cite{chaptertwo-bhargava-shankar-2-selmer}, we need only prove that the integral with respect to Haar measure of the number of binary quartics $F$ in $B(u,t,\lambda,X)\cap Y(\Z)$ for which there is a prime $p$ with $p > \Pi$ such that $F$ is \emph{not} locally soluble or $\PGL_2(\Z_p)\cdot F\neq V(\Z_p)\cap \PGL_2(\Q_p)\cdot F$ is $$\ll \frac{\lambda^{12 + o(1)}}{\Pi\log{\Pi}}$$ when e.g.\ $\Pi\leq \lambda^{10^{-10}}$ and we are in the "bulk", so that $t\ll \lambda^{o(1)}$.

Now, just as in the proof of Proposition $3.18$ of Bhargava-Shankar's \cite{chaptertwo-bhargava-shankar-2-selmer}, if $F\in V(\Z)^\nontriv$ is a binary quartic that is not locally soluble at $p$, then $F\pmods{p}$ has splitting type one of $(1^2 1^2)$, $(2^2)$, or $(1^4)$. But if moreover $I(F) = 0$, and thus $I(F)\equiv 0\pmods{p}$, one gets much more: the splitting types $(1^2 1^2)$ and $(2^2)$ are not possible, as one can see by e.g.\ explicit calculation.\footnote{Working over $\Fbar_p$ and changing variables suitably, this amounts to the assertion that $$I(X^2\cdot (X - n\cdot Y)^2) = I(X^4 - 2n\cdot X^3 Y + n^2\cdot X^2 Y^2) = n^4.$$} Thus $F\pmods{p}$ must be a fourth power of a linear form, which is to say that $F\pmods{p}$ lies on the codimension $3$ subvariety $Z\subseteq V$ given by fourth powers of linear forms, namely the affine cone on a rational normal curve.

Similarly if $\PGL_2(\Z_p)\cdot F\neq V(\Z_p)\cap \PGL_2(\Q_p)\cdot F$, then, as in Bhargava-Shankar's \cite{chaptertwo-bhargava-shankar-2-selmer}, $p^2\mid \Delta(F)$, and indeed, up to $(X,Y)\mapsto (Y,X)$, there is a $\gamma\in \GL_2(\Z_p)$ such that $\diag(1,p)\cdot \gamma\cdot F\in V(\Z_p)$, which is to say that $(\gamma\cdot F)(X,Y) =: \widetilde{a}\cdot X^4 + \widetilde{b}\cdot X^3 Y + \widetilde{c}\cdot X^2 Y^2 + \widetilde{d}\cdot X Y^3 + \widetilde{e}\cdot Y^4$ has $p\mid \widetilde{d}$ and $p^2\mid \widetilde{e}$. Since $I(\gamma\cdot F) = 0$ this means that $p\mid \widetilde{c}$, and so using $I(\gamma\cdot F) = 0$ again we see that $p^2\mid \widetilde{b}\cdot \widetilde{d}$, whence either $p\mid \widetilde{b}$ and $\gamma\cdot F$ (and thus so too $F$) has splitting type $(1^4)$, or else $p^2\mid \widetilde{d}$ in which case $\diag(1,p)\cdot \gamma\cdot F$ has splitting type $(1^4)$, and of course the same holds after reduction via $\SL_2(\Z)$.

So it follows that in order to obtain the desired bound on the integral with respect to Haar measure of the number of $F\in B(u,t,\lambda,X)\cap Y(\Z)$ which are not locally soluble or have $\PGL_2(\Z_p)\cdot F\neq V(\Z_p)\cap \PGL_2(\Q_p)\cdot F$ at some $p$ with $p > \Pi$ it suffices to show that the number of $F\in B(u,t,\lambda,X)\cap Y(\Z)$ for which the reduction $F\pmods{p}\in Z(\F_p)$ for some $p > \Pi$ is $\ll \frac{\lambda^{12 + o(1)}}{\Pi\log{\Pi}}$, and then the desired bound follows from invoking Theorem $1.1$ of Browning-Heath-Brown's \cite{chapterthree-browning-heath-brown}.

\subsection{Point counting.}

Let us now prove Lemmas \ref{the tail estimate} and \ref{the bulk estimate}. We note that we will give an essentially one-line proof of Lemma \ref{the tail estimate} (namely, "use the divisor bound to determine $a,e$ from $b,c,d$"), which subsumes the entirety of Ruth's Section $3.5$ (pages $29-37$ of \cite{chapterthree-ruth's-thesis}).

\begin{proof}[Proof of Lemma \ref{the tail estimate}.]
Let $F\in \lambda\cdot n_u\cdot a_t\cdot G_0\cdot L\cap V(\Z)^\nontriv$. Write $F(X,Y) =: a\cdot X^4 + b\cdot X^3 Y + c\cdot X^2 Y^2 + d\cdot X Y^3 + e\cdot Y^4$. Evidently (by e.g.\ compactness of $G_0$ and $L$) we have that:
\begin{align*}
0\neq |a|&\ll \frac{\lambda^4}{t^4},
\\|b|&\ll \frac{\lambda^4}{t^2},
\\|c|&\ll \lambda^4,
\\|d|&\ll t^2\cdot \lambda^4,
\\0\neq |e|&\ll t^4\cdot \lambda^4.
\end{align*}\noindent
Therefore the number of tuples $(b,c,d)\in \Z^3$ among $F\in \lambda\cdot n_u\cdot a_t\cdot G_0\cdot L\cap V(\Z)^\nontriv$ is $\ll \lambda^{12}$.

Moreover, by hypothesis $12ae - 3bd + c^2 = 0$, i.e.\ $0\neq 12ae = 3bd - c^2\ll \lambda^8$. Thus $(b,c,d)$ determine $(a,e)$ up to $\ll \lambda^{o(1)}$ choices. In other words, the map $$\lambda\cdot n_u\cdot a_t\cdot G_0\cdot L\cap V(\Z)^\nontriv\to \Z^3$$ via $$a\cdot X^4 + b\cdot X^3 Y + c\cdot X^2 Y^2 + d\cdot X Y^3 + e\cdot Y^4\mapsto (b,c,d)$$ has image of size $\ll \lambda^{12}$ and fibres of size $\ll \lambda^{o(1)}$. The lemma follows.
\end{proof}

As for Lemma \ref{the bulk estimate}, we first note that it is essentially identical to Ruth's Proposition $3.4.1$ (modulo the inaccuracies in his treatment that we have already mentioned), which he states without proof.

For the reader's convenience we will give a full proof of Lemma \ref{the bulk estimate} anyway.
\begin{proof}[Proof of Lemma \ref{the bulk estimate}.]
Before we begin we note once again that it is not necessary to use the smoothed delta symbol method, since we are asking about zeroes of a quadric in five variables, something easily handled by the classical form of the circle method.

We follow the notation of Heath-Brown's \cite{chapterthree-heath-brown-delta-method}. Let $w_0\in C_c^\infty(\R)$ via $$w_0(x) := \begin{cases} \exp\left(-\frac{1}{(1-x^2)}\right) & |x| < 1\\ 0 & |x|\geq 1\end{cases}.$$ Let $\omega(x) := \frac{4}{\int_\R w_0(t) dt}\cdot w_0(x)$. Let $h(x,y) := \sum_{q\geq 1} \frac{\omega(qx) - \omega\left(\frac{|y|}{qx}\right)}{qx}$.

Note that $h(x,y) = 0$ when $x\gg 1 + |y|$ and that $h(x,y)\ll x^{-1}$.

We will first detail the argument in the case of $M = 1$ (i.e.\ no congruence condition) and then comment on necessary modifications to more general $M$ and $F_0\in V(\Z/M)$ as above.

Applying Theorem $2$ of Heath-Brown's \cite{chapterthree-heath-brown-delta-method} with his $n = 5$ and his $Q = \lambda^4$, we find that:
\begin{align*}
&\sum_{F\in V(\Z)} \phi(a_t^{-1}\cdot n_u^{-1}\cdot (\lambda\cdot \id)^{-1}\cdot F)
\\&= (\lambda^{-8} + O_N(\lambda^{-N}))\cdot \sum_{q\geq 1} q^{-5} \sum_{\vec{c}\in V(\Z)^*} \left(\sum_{u\in (\Z/q)^\times}\sum_{G\in V(\Z/q)} e_q(u\cdot I(G) + G\cdot \vec{c})\right)\\&\quad\quad\quad\quad\quad\quad\quad\quad\quad\quad\quad\quad\cdot \int_{F\in V(\R)} dF\, \phi(a_t^{-1}\cdot n_u^{-1}\cdot (\lambda\cdot \id)^{-1}\cdot F)\cdot h\left(\frac{q}{\lambda^4}, \frac{I(F)}{\lambda^8}\right)\cdot e_q(-F\cdot \vec{c}),
\end{align*}\noindent
where we have written $e_q(z) := e\left(\frac{z}{q}\right) := e^{\frac{2\pi i z}{q}}$.

For us the error term will consist of those terms where $\vec{c}\neq \vec{0}$, and the terms with $\vec{c} = \vec{0}$ will comprise the main term (in the end we will simply cite Heath-Brown's \cite{chapterthree-heath-brown-delta-method} for the analysis of the main term, which in any case is considerably simpler).

Via the change of variable $F\mapsto n_u\cdot a_t\cdot (\lambda\cdot \id)\cdot F$ (note that $(\lambda\cdot \id)\cdot F = \lambda^4\cdot F$ since $F$ is homogeneous of degree $4$ --- here $(\lambda\cdot \id)\cdot F$ on the left-hand side indicates the action of $\lambda\cdot \id\in G$ on $F\in V$ via $G\actson V$, and the $\cdot$ on the right-hand side denotes multiplication), we find that:
\begin{align*}
&\int_{F\in V(\R)} dF\, \phi(a_t^{-1}\cdot n_u^{-1}\cdot (\lambda\cdot \id)^{-1}\cdot F)\cdot h\left(\frac{q}{\lambda^4}, \frac{I(F)}{\lambda^8}\right)\cdot e_q(-F\cdot \vec{c})
\\&= \lambda^{20}\cdot \int_{F\in V(\R)} dF\, \phi(F)\cdot h\left(\frac{q}{\lambda^4}, I(F)\right)\cdot e_q(-\lambda^4 \cdot F\cdot ((n_u\cdot a_t)^\dag\cdot \vec{c})).
\end{align*}

Therefore we see that the error term is:
\begin{align*}
&(\lambda^{12} + O_N(\lambda^{-N}))\cdot \sum_{q\geq 1} q^{-5} \sum_{\vec{0}\neq \vec{c}\in V(\Z)^*} \left(\sum_{u\in (\Z/q)^\times}\sum_{G\in V(\Z/q)} e_q(u\cdot I(G) + G\cdot \vec{c})\right)\\&\quad\quad\quad\quad\quad\quad\quad\quad\quad\quad\quad\quad\quad\quad\cdot \int_{F\in V(\R)} dF\, \phi(F)\cdot h\left(\frac{q}{\lambda^4}, I(F)\right)\cdot e_q(-\lambda^4 \cdot F\cdot ((n_u\cdot a_t)^\dag\cdot \vec{c})).
\end{align*}

Note that the $\phi(F)$ term in the integral forces $||F||_\infty\ll_\phi 1$ if the integrand is to be nonzero, and then our observation that $h(x,y) = 0$ if $x\gg 1 + |y|$ forces $q\ll_\phi \lambda^4$ as well.

Note also that if $q\ll \lambda^{4-\eps}\cdot ||(n_u\cdot a_t)^\dag\cdot \vec{c}||_\infty$ --- i.e.\ if $$||(n_u\cdot a_t)^\dag\cdot \vec{c}||_\infty\gg \frac{q}{\lambda^{4-\eps}}$$ --- we find, by repeated integration by parts, that such terms contribute $O_{\eps,\phi,N}(t^N\cdot \lambda^{O(1) - N})$.

Also the complete exponential sum, which is just $$\sum_{u\in (\Z/q)^\times}\sum_{G_0, \ldots, G_4\in \Z/q} e_q(u\cdot (12 G_0 G_4 - 3 G_1 G_3 + G_2^2) + (c_0 G_0 + c_1 G_1 + c_2 G_2 + c_3 G_3 + c_4 G_4)),$$ is very easy to calculate (see e.g.\ page $49$ of \cite{my-thesis}). We conclude that $$\sum_{u\in (\Z/q)^\times}\sum_{G\in V(\Z/q)} e_q(u\cdot I(G) + G\cdot \vec{c})\ll \begin{cases} 0 & \exists p > 3 : v_p(q) = 1\\ q^{\frac{7}{2} + o(1)} & \forall p\vert q\text{ s.t.\ } p > 3, v_p(q)\geq 2,\end{cases}$$ and indeed one can sharpen the bound significantly.

Now we return to the smoothed delta symbol method. Again, the integral is nonnegligible only for $$||(n_u\cdot a_t)^\dag\cdot \vec{c}||_\infty\ll \frac{q}{\lambda^{4-\eps}}$$ (in which case it is $\ll_\phi \frac{\lambda^4}{q}$). Thus either $\vec{c} = \vec{0}$, in which case the corresponding summand contributes to the main term dealt with by Ruth (and by Heath-Brown in \cite{chapterthree-heath-brown-delta-method}), or else $\vec{c}\neq \vec{0}$, in which case we must have that $q\gg \frac{\lambda^{4-\eps}}{t^4}$ since by inspection $||(n_u\cdot a_t)^\dag\cdot \vec{c}||_\infty\gg t^{-4}\cdot ||\vec{c}||_\infty\gg t^{-4}$.

The error term is therefore:
\begin{align*}
&\ll \lambda^{12 + o(1)}\cdot \sum_{\frac{\lambda^{4-\eps}}{t^4}\ll q\ll \lambda^4,\, q\text{ powerful}} q^{-\frac{3}{2}} \\&\quad\quad\quad\quad\quad\quad \sum_{\vec{0}\neq \vec{c}\in V(\Z)^* : ||\vec{c}||_\infty\ll \frac{q}{\lambda^{4-\eps}}} \int_{F\in V(\R)} dF\, \phi(F)\cdot h\left(\frac{q}{\lambda^4}, I(F)\right)\cdot e_q(-\lambda^4 \cdot F\cdot ((n_u\cdot a_t)^\dag\cdot \vec{c})).
\end{align*}\noindent
Bounding the integrals trivially (i.e.\ by $\ll_\phi \frac{q}{\lambda^4}$) and noting that the number of $0\neq \vec{c}\in \Z^5$ such that $||(n_u\cdot a_t)^\dag\cdot \vec{c}||_\infty\ll \frac{q}{\lambda^{4-\eps}}$ is $$\ll \frac{t^4\cdot q}{\lambda^{4-\eps}}\cdot \left(1 + \frac{t^2\cdot q}{\lambda^{4-\eps}}\right)\cdot \left(1 + \frac{q}{\lambda^{4-\eps}}\right)\cdot \left(1 + \frac{q}{t^2\cdot \lambda^{4-\eps}}\right)\cdot \left(1 + \frac{q}{t^4\cdot \lambda^{4-\eps}}\right)$$ when $\frac{\lambda^{4-\eps}}{t^4}\ll q\ll \lambda^4$, we get that the error term is:
\begin{align*}
&\ll t^4\cdot \lambda^{4+\eps+o(1)}\cdot \sum_{\frac{\lambda^{4-\eps}}{t^4}\ll q\ll \lambda^4,\, q\text{ powerful}}  q^{\frac{1}{2}}\cdot \left(1 + \frac{t^2\cdot q}{\lambda^{4-\eps}}\right)\cdot \left(1 + \frac{q}{\lambda^{4-\eps}}\right)\cdot \left(1 + \frac{t^{-2}\cdot q}{\lambda^{4-\eps}}\right)\cdot \left(1 + \frac{t^{-4}\cdot q}{\lambda^{4-\eps}}\right)
\\&\ll t^6\cdot \lambda^{8 + 5\eps},
\end{align*}\noindent
as desired.

Thus we have bounded the error term suitably. The required analysis of the main term is already done in Heath-Brown's \cite{chapterthree-heath-brown-delta-method} (see e.g.\ the bottom of page $51$, i.e.\ the end of the proof of his Theorems $4$ and $5$), at least in the case $M = 1$.

We now discuss the modifications necessary for general $M\in \Z^+$ and $F_0\in V(\Z/M)$. First, in the application of the smoothed delta symbol method, instead of summing over $F\in V(\Z)$, we sum instead over the $F\in V(\Z)$ for which $F\equiv F_0\pmod{M}$ by summing over $\widetilde{F}\in V(\Z)$ and writing $F := F_0 + M\cdot \widetilde{F}$ (we implicitly choose a representative of $F_0$ in $V(\Z)$ and abuse notation by writing it as $F_0\in V(\Z)$). We then change variables from $\widetilde{F}$ back to $F$ in the integral and incur a factor of $M^{-5}$. The rest of the analysis of the error term is precisely the same (except that the primes one has to treat separately in the omitted complete exponential sum calculation are now those $p\vert 6M$, and the error terms now depend on $M$).

It remains to treat the main term, i.e.\ the local densities. We note that, by definition, we find local densities $$\sigma_p^{(F_0\pmods{M})}(Y(\Z)) := \lim_{k\to \infty} p^{-4k}\cdot \#|\{\widetilde{F}\in V(\Z/p^k) : I(F_0 + M\cdot \widetilde{F})\equiv 0\pmods{p^k}\}|.$$ We therefore are reduced to the claim that $$M^{-5}\cdot \prod_p \sigma_p^{(F_0\pmods{M})}(Y(\Z)) = \prod_p \sigma_p(Y_{F_0\pmods{M}}(\Z)).$$

Of course for $p\nmid M$ we have that $$\sigma_p^{(F_0\pmods{M})}(Y(\Z)) = \sigma_p(Y_{F_0\pmods{M}}(\Z)) = \sigma_p(Y(\Z)),$$ via the evident change of variables $\widetilde{F}\mapsto M^{-1}\cdot (\widetilde{F} - F_0)$.

However, it is also evident that $$M^{-5}\cdot \prod_{p\vert M} \sigma_p^{(F_0\pmods{M})}(Y(\Z)) = \prod_{p\vert M} \sigma_p(Y_{F_0\pmods{M}}(\Z)),$$ for the following reason. For $k\in \Z^+$ with $k\gg 1$ we have that: 
\begin{align*}
&M^{-5}\cdot \prod_{p\vert M} \sigma_p(Y_{F_0\pmods{M}}(\Z))
\\&= M^{-5}\cdot \prod_{p\vert M} \begin{pmatrix}
p^{-4k\cdot v_p(M)}\cdot \#|\{\widetilde{F}\in V(\Z/p^{k\cdot v_p(M)}) : I(F_0 + M\cdot \widetilde{F})\equiv 0\pmods{p^{k\cdot v_p(M)}}\}| \\\hspace{-3in}+ O(p^{-k\cdot v_p(M)})
\end{pmatrix}
\\&= \left(1 + O(e^{-\Omega_M(k)})\right)\cdot M^{-4k-5}\cdot \#|\{\widetilde{F}\in V(\Z/M^k) : I(F_0 + M\cdot \widetilde{F})\equiv 0\pmods{M^k}\}|,
\end{align*}\noindent
by the Chinese remainder theorem and that fact that all $p\geq 2$.

Because the condition $I(F_0 + M\cdot \widetilde{F})\equiv 0\pmods{M^k}$ only depends on $\widetilde{F}\pmods{M^{k-1}}$, we find that:
\begin{align*}
& M^{-4k-5}\cdot \#|\{\widetilde{F}\in V(\Z/M^k) : I(F_0 + M\cdot \widetilde{F})\equiv 0\pmods{M^k}\}|
\\&= M^{-4k}\cdot \#|\{F\in V(\Z/M^k) : I(F)\equiv 0\pmods{M^k}, F\equiv F_0\pmods{M}\}|
\\&= \prod_{p\vert M} \begin{pmatrix}
p^{-4k\cdot v_p(M)}\cdot \#|\{F\in V(\Z/p^{k\cdot v_p(M)}) : I(F)\equiv 0\pmods{p^{k\cdot v_p(M)}}, F\equiv F_0\pmods{M}\}| \\\hspace{-3in}+ O(p^{-k\cdot v_p(M)})
\end{pmatrix}.
\end{align*}

Thus taking $k\to \infty$ we find that $$M^{-5}\cdot \prod_{p\vert M} \sigma_p^{(F_0\pmods{M})}(Y(\Z)) = \prod_{p\vert M} \sigma_p(Y_{F_0\pmods{M}}(\Z)),$$ as desired.
\end{proof}

\section{Proof of Theorem \ref{all six families} for $k\equiv 2\pmods{6}$.}\label{square mordell curves section}

Now to the sketch of the proof of Theorem \ref{all six families} when $k\equiv 2\pmods{6}$.

\subsection{Reduction to point counting.}

We run an argument similar to the above, except our notation follows Bhargava-Ho's \cite{chapterthree-bhargava-ho-website} rather than Ruth's \cite{chapterthree-ruth's-thesis}, and we appeal in the end to part (f) of Theorem $1.1$ of Bhargava-Ho's \cite{chapterthree-bhargava-ho-website} (rather than Theorem $3.1$ of Bhargava-Shankar's \cite{chaptertwo-bhargava-shankar-2-selmer}) to calculate the product of local densities. Because the analytic part of the argument is essentially the same as in the previous section (in fact it is easier, since in the circle method argument we deal with a quadric in eight variables instead of five) we will be significantly more terse in this section.

Again, we follow the notation in Bhargava-Ho's \cite{chapterthree-bhargava-ho-website} (the relevant parametrization is by triply symmetric hypercubes). Let $V := 2\otimes \Sym_3(2)$, the space of pairs of "threes-in" binary cubic forms. Let $G$ be the image in $\GL(V)$ of $\{(g,h)\in \GL_2\times \GL_2 : \det{g}\cdot (\det{h})^3 = 1\}$, acting in the evident way on $2\otimes \Sym_3(2)$ (the first $\GL_2$ on the first factor via the standard representation, and the second $\GL_2$ on the second via the induced action on $\Sym_3$ of the standard representation). Note that $G\cong (\SL_2\times \SL_2)/\mu_2$.

We write, for $v\in V$, $$H(v) := \max\left(|I_2(v)|^{\frac{1}{2}}, |I_6(v)|^{\frac{1}{6}}\right)^{24},$$ where $I_2$ and $I_6$ are the invariants $a_2$ and $a_6$ of Section $6.3.2$ of Bhargava-Ho's \cite{chapterthree-bhargava-ho} and $a_1$ and $a_3$ of line $6$ (corresponding to the family $F_1(3)$) of Table $1$ in Bhargava-Ho's \cite{chapterthree-bhargava-ho-website}.

Let $R$ be a fundamental domain for $G(\R)\actson V(\R)^{\Delta\neq 0}$ (note that Bhargava-Ho write $V(\R)^{\mathrm{stab}} := V(\R)^{\Delta\neq 0}$), as constructed in Section $5$ of Bhargava-Ho's \cite{chapterthree-bhargava-ho-website} (via, in their notation, $R := \coprod_i R^{(i)}$). Let $L := \{v(\vec{a}) : \vec{a}\in (\R^m)^{\Delta\neq 0}_{H = 1}\}$ (here in their notation $m = 2$ and $v(\vec{a})$ is as defined in Section $4$ of Bhargava-Ho's \cite{chapterthree-bhargava-ho-website}) and $\Lambda := \{(\lambda\cdot \id, \id)\in \GL_2(\R)\times \GL_2(\R) : \lambda\in \R^+\}\subseteq \GL_2(\R)\times \GL_2(\R)$. Note that $R = \Lambda\cdot L$. Let $R(X) := \{v\in R : H(v)\leq X\}$. Let $\vec{F}_\pm := v((0,\pm 1))\in L$ be the two points in $L$ with $I_2 = 0$.

Note that, by construction, since $H((\lambda, \id)\cdot v) = \lambda^{24}\cdot H(v)$, the coefficients of a $v\in \lambda\cdot L\subseteq R(X)$ are all $\ll \lambda\ll X^{\frac{1}{24}}$, and hence, for $G_0\subseteq G(\R)$ compact, the coefficients of a $v\in \lambda\cdot G_0\cdot L\subseteq R(X)$ are all $\ll_{G_0} \lambda\ll X^{\frac{1}{24}}$.

Let $\mathcal{F}$ be a fundamental domain for $G(\Z)\actson G(\R)$, as constructed in Section $5.2$ of Bhargava-Ho's \cite{chapterthree-bhargava-ho-website}. Note that $\mathcal{F}$ lies inside the following Siegel set: $$\mathcal{F}\subseteq N\cdot A\cdot K,$$ where
\begin{align*}
N &:= \left\{(n_{u_1}, n_{u_2})\in G(\R) : |u_i|\leq \frac{1}{2}\right\},
\\A &:= \left\{(a_{t_1}, a_{t_2})\in G(\R) : t_i\geq \sqrt{\frac{\sqrt{3}}{2}}\right\},
\\K &:= \SO_2(\R)\times \SO_2(\R)\subseteq G(\R),
\end{align*}\noindent
with notation as before: $n_u := \smalltwobytwo{1}{0}{u}{1}$ and $a_t := \smalltwobytwo{t^{-1}}{0}{0}{t}$.

As before, let $\alpha\in C_c^\infty(G(\R))$ and $\beta\in C_c^\infty(L)$ be compactly supported smooth functions such that: $\alpha$ is $K$-invariant, $\int_{G(\R)} \alpha = 1$, $\beta(\vec{F}_{\pm}) = 1$, and $\supp{\beta}, \beta^{-1}(\{1\})\subseteq L^{(1)}$ are both unions of two small compact intervals respectively containing $\vec{F}_\pm$. Let $$\phi(v) := \sum_{g\cdot v_L = v} \alpha(g)\cdot \beta(v_L).$$

Let $V(\Z)^\nontriv := \{(F_1, F_2)\in V(\Z) : \forall [x,y]\in \P^1(\Q), \disc_{X,Y}(x\cdot F_1(X,Y) - y\cdot F_2(X,Y))\neq 0\}$. Let $Y(\Z) := \{v\in V(\Z) : I_2(v) = 0, I_6(v)\neq 0\}$ and $Y(\Z)^\nontriv := Y(\Z)\cap V(\Z)^\nontriv$.

For $M\in \Z^+$ and $v_0\in V(\Z/M)$, let $Y_{v_0\pmods{M}}(\Z) := \{v\in Y(\Z) : v\equiv v_0\pmods{M}\}$ and $Y_{v_0\pmods{M}}(\Z)^\nontriv := Y_{v_0\pmods{M}}(\Z)\cap V(\Z)^\nontriv$.

Write $n_{(u_1, u_2)} := (n_{u_1}, n_{u_2})$, and similarly $a_{(t_1, t_2)} := (a_{t_1}, a_{t_2})$.

Let, for $S\subseteq V(\Z)$, $$\#_{\phi,\mu}|B(\vec{u}, \vec{t}, \lambda, X)\cap S| := \sum_{\vec{F}\in S} \mu\left(\frac{I_6(\vec{F})}{X}\right)\cdot \phi(a_{\vec{t}}^{-1}\cdot n_{\vec{u}}^{-1}\cdot (\lambda\cdot \id, \id)\cdot (F_1, F_2)).$$

We again see that it suffices to prove the following two lemmas. Save for an easy (since one saves a nonzero constant probability at each prime) use of the Selberg upper bound sieve to control $\#_{\phi,\mu}|B(\vec{u}, \vec{t}, \lambda, X) - V(\Z)^\nontriv|$, the proof that these lemmas suffice, including reducing the evaluation of the resulting product of local densities by using the same trick to reduce to the same evaluation done (for the larger family $F_1(3)$) in the proof of part (f) of Theorem $1.1$ of Bhargava-Ho's \cite{chapterthree-bhargava-ho-website}, tracks that of the previous section, so we omit it --- though note that full details appear in \cite{sum-of-two-cubes-paper-with-ari-and-manjul}.

\begin{lem}\label{the tail estimate for squares}
Let $\lambda\in \R^+, u_i\in \left[-\frac{1}{2}, \frac{1}{2}\right], \sqrt{\frac{\sqrt{3}}{2}}\leq t_i\ll \lambda$. Then: $$\#_{\phi,\mu}|B(\vec{u}, \vec{t}, \lambda, X)\cap Y(\Z)^\nontriv|\ll_\phi \lambda^{6 + o(1)} + \lambda^{5 + o(1)}\cdot t_1\cdot t_2^3.$$
\end{lem}

Note that the second term is admissible because Haar measure on $G$ involves $t_1^{-2}\, d^\times t_1\, t_2^{-2}\, d^\times t_2$ and $t_1\cdot t_2\gg \lambda$ implies $B(\vec{u}, \vec{t}, \lambda, X)\cap Y(\Z)^\nontriv = \emptyset$.

\begin{lem}\label{the bulk estimate for squares}
Let $M\in \Z^+$ and $v_0\in V(\Z/M)$. Let $$\lambda\in \R^+, u_i\in \left[-\frac{1}{2}, \frac{1}{2}\right], \sqrt{\frac{\sqrt{3}}{2}}\leq t_i\ll \lambda.$$ Then:
\begin{align*}
\#_{\phi,\mu}|B(\vec{u}, \vec{t}, \lambda, X)\cap Y_{v_0\pmods{M}}(\Z)|
&= \sigma_\infty(\vec{u}, \vec{t}, \lambda, X)\cdot \prod_p \sigma_p(Y_{v_0\pmods{M}}(\Z)) \\&\quad\quad + O_\phi(||\vec{t}||_\infty^8\cdot \lambda^{4 + o(1)}) + O_{\phi, N}(||\vec{t}||_\infty^N\cdot \lambda^{O(1) - N}),
\end{align*}\noindent
where $$\sigma_\infty(\vec{u}, \vec{t}, \lambda, X) := \lim_{\eps\to 0} \frac{\int_{v\in V(\R) : |I_2(v)|\leq \eps} dv\, \mu\left(\frac{I_6(\vec{F})}{X}\right)\cdot \phi(a_{\vec{t}}^{-1}\cdot n_{\vec{u}}^{-1}\cdot (\lambda\cdot \id, \id)^{-1}\cdot v)}{2\eps}$$ and $$\sigma_p(Y_{v_0\pmods{M}}(\Z)) := \lim_{n\to \infty} p^{-4n}\cdot \#|\{\vec{F}\in V(\Z/p^n) : I_2(\vec{F})\equiv 0\pmods{p^n}, v\equiv v_0\pmods{M}\}.$$
\end{lem}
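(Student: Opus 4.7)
The plan is to mimic the proof of Lemma \ref{the bulk estimate} essentially verbatim, with the quadric $I_2(v) = 3a_1d_2 - b_1c_2 + c_1b_2 - 3d_1a_2$ (in $n = 8$ variables, the coefficients of $v = (F_1, F_2) \in V = 2 \otimes \Sym^3(2)$) playing the role of $I$. Since this is a quadric in eight variables rather than five, the circle method analysis is actually more comfortable than before; in particular there is a wider margin between the main term ($\asymp \lambda^6$) and what the circle method can detect as an error.

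First I would apply Heath-Brown's smoothed delta symbol method (Theorem $2$ of \cite{chapterthree-heath-brown-delta-method}) with his $n = 8$ and $Q$ taken to match the scale of the box, which here is $Q = \lambda$ (since under $(\lambda\cdot\id, \id)$ the coordinates of $v\in L$ scale by $\lambda$, in contrast to the degree-$4$ scaling in the binary quartic setup). After changing variables $v\mapsto n_{\vec{u}}\cdot a_{\vec{t}}\cdot (\lambda\cdot\id, \id)\cdot v$ (Jacobian $\lambda^8$, since the $n_{\vec{u}}, a_{\vec{t}}$ pieces preserve volume), one obtains the representation
\begin{align*}
\#_\phi|B(\vec{u}, \vec{t}, \lambda, X)\cap Y(\Z)| &= \lambda^{6}\cdot \sum_{q\geq 1} q^{-8} \sum_{\vec{c}\in V(\Z)^*} S_q(\vec{c})\cdot I_q(\vec{c}) + O_N(\lambda^{-N}),
\end{align*}
where $S_q(\vec{c})$ is the standard Heath-Brown complete exponential sum attached to $I_2$ and $I_q(\vec{c}) = \int_{V(\R)} \phi(v)\cdot h(q/\lambda, I_2(v))\cdot e_q(-\lambda\cdot v\cdot (n_{\vec{u}}\cdot a_{\vec{t}})^\dag\cdot \vec{c})\, dv$. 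The main term in the claim comes from $\vec{c} = \vec{0}$: this contribution is the same singular integral times singular series that Heath-Brown handles in his Theorems $4$ and $5$, and by construction it equals $\sigma_\infty(\vec{u},\vec{t},\lambda,X)\cdot \prod_p \sigma_p(Y(\Z))$.

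The bulk of the work is then the $\vec{c}\neq \vec{0}$ error term, which I treat exactly as in the proof of Lemma \ref{the bulk estimate}. Three inputs combine: compact support of $\phi$ together with the vanishing of $h(x,y)$ for $x\gg 1+|y|$ forces $q\ll \lambda$; repeated integration by parts in $v$ shows that $I_q(\vec{c})$ is negligible unless $\|(n_{\vec{u}}\cdot a_{\vec{t}})^\dag\cdot \vec{c}\|_\infty\ll q/\lambda^{1-\eps}$, which combined with $\vec{c}\neq \vec{0}$ forces $q$ to be bounded below in terms of the weights of the action of $a_{\vec{t}}$ on $V$; and the classical evaluation of $S_q(\vec{c})$ for a quadric in eight variables gives $S_q(\vec{c})\ll q^{9/2 + o(1)}$ together with vanishing unless $q$ is powerful away from the primes dividing $6M$. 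Counting $\vec{c}\in \Z^8$ satisfying the frequency constraint by a product of factors $\prod_{i=1}^{8}(1 + q\cdot t_1^{\alpha_i} t_2^{\beta_i}/\lambda^{1-\eps})$, where $(\alpha_i, \beta_i)$ are the weights of the diagonal torus on the eight coordinates of $V$, and summing over powerful $q\ll \lambda$ then produces the claimed bound $O_\phi(\|\vec{t}\|_\infty^8\cdot \lambda^{4+o(1)})$, together with the rapidly decaying tail $O_{\phi,N}(\|\vec{t}\|_\infty^N\cdot \lambda^{O(1)-N})$ from integration by parts. The congruence condition $v\equiv v_0\pmod{M}$ is handled verbatim as at the end of the proof of Lemma \ref{the bulk estimate}, by the substitution $v = v_0 + M\cdot \tilde{v}$ (incurring a factor $M^{-8}$ in the integral), followed by checking that the resulting local densities $\sigma_p^{(v_0\pmod M)}(Y(\Z))$ assemble (via the Chinese remainder theorem and the fact that the constraint $I_2(v_0 + M\tilde v) \equiv 0\pmod{M^k}$ depends only on $\tilde v\pmod{M^{k-1}}$) into the desired $\prod_p \sigma_p(Y_{v_0\pmod{M}}(\Z))$.

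The only mildly obstacle-flavored step is the bookkeeping of the eight torus weights $(\alpha_i, \beta_i)$ coming from the action of $(a_{t_1}, a_{t_2})$ on $V = 2\otimes \Sym^3(2)$ (namely $(\pm t_1^{\pm 1})\otimes (t_2^{\pm 3}, t_2^{\pm 1})$): these weights control both the shape of the distorted box and the $t$-dependence of the count of admissible frequencies $\vec{c}$, and one should check that they combine to give the exponent $8$ of $\|\vec{t}\|_\infty$ in the error (rather than something larger that would overflow into the range $\|\vec{t}\|_\infty\ll \lambda^\delta$ needed for Theorem \ref{square mordell curves}). Since $\det(a_{\vec{t}}) = 1$ on $V$, the weights sum to zero in each $t$-variable and the calculation goes through cleanly; no novel difficulty is present beyond what already appeared in the proof of Lemma \ref{the bulk estimate}.
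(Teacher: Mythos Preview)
Your approach matches the paper's --- reduce to the proof of Lemma~\ref{the bulk estimate}, taking $n=8$ and $Q=\lambda$ in Heath-Brown's Theorem~$2$ --- but you have transplanted the exponential sum analysis from the odd-variable case verbatim, and this is precisely the one place where the paper says the argument must change. For a nondegenerate quadratic form in an \emph{even} number of variables the complete sum $S_p(\vec{c}) = \sum_{u\in(\Z/p)^\times}\sum_G e_p(uI_2(G)+G\cdot\vec{c})$ does \emph{not} vanish for $p$ prime: after completing the square the inner Gauss sum contributes a factor $\left(\frac{u}{p}\right)^8 = 1$ rather than $\left(\frac{u}{p}\right)$, so the remaining sum over $u$ is a Ramanujan sum (equal to $-1$ or $p-1$) rather than a character sum, and is never zero. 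Hence there is no reduction to powerful $q$, and the correct bound is $S_q(\vec{c})\ll q^{5+o(1)}$ for all $q$, not $q^{9/2+o(1)}$ on powerful $q$ only.

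This is an easy fix rather than a fatal flaw: once you use $S_q(\vec{c})\ll q^{5+o(1)}$ and sum over \emph{all} $q\ll\lambda$, your bookkeeping with the eight torus weights and the integral bound $\ll\lambda/q$ still produces the claimed error $O_\phi(\|\vec{t}\|_\infty^8\cdot\lambda^{4+o(1)})$ --- the gap between main term ($\asymp\lambda^6$) and error is wide enough in eight variables that the looser exponential sum input is harmless. Your treatment of the congruence condition $v\equiv v_0\pmod{M}$ and the identification of the main term with $\sigma_\infty\cdot\prod_p\sigma_p$ are correct as stated.
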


\subsection{The uniformity estimate.}\label{uniformity estimate for square mordell curves section}

In fact the proof of the first part of the uniformity estimate reduces to the one proven in Section \ref{the uniformity estimate for mordell curves section}, for the following reason. Recall that, given a pair of binary cubic forms $\vec{F} =: (F_1, F_2)$ with each $F_i\in \Z[X,Y]$, one produces a binary quartic form via $$G_{\vec{F}}(x,y) := \disc_{X,Y}(x\cdot F_1(X,Y) - y\cdot F_2(X,Y))\in \Z[x,y].$$ Note that, as one can see by e.g.\ explicit calculation, $I_2(\vec{F})\,\vert\, I(G_{\vec{F}})$.

Now in fact one has by definition that the pair $\vec{F} = (F_1, F_2)$ is locally soluble at $p$ if and only if $-\frac{1}{3}\cdot G_{\vec{F}}$ is locally soluble at $p$. Therefore to bound the number of $\vec{F}\in B(\vec{u}, \vec{t}, \lambda, X)\cap Y(\Z)$ which are not locally soluble at a $p$ with $p > \Pi$, it suffices to observe that the statement that $\vec{F}$ is not locally soluble at $p$ implies that $G_{\vec{F}}\pmods{p}$ lies on a codimension $3$ subvariety of the space of binary quartics, so that (after checking the independence of the resulting three equations in the coefficients of $\vec{F}$, which in fact imply that either $F_2$ is proportional to $F_1$ or else $F_1 = 0$) $\vec{F}\pmods{p}$ lies on a codimension $3$ subvariety of the space of pairs of binary cubics, in which case we may again apply Theorem $1.1$ of Browning-Heath-Brown's \cite{chapterthree-browning-heath-brown} to conclude.

The proof of the second part of the uniformity estimate --- that is, the bound on the number of $\vec{v}$ with $G(\Z_p)\cdot \vec{v}\neq G(\Q_p)\cdot \vec{v}\cap V(\Z_p)$ for some $p > \Pi$ --- is more intricate and appears in full in \cite{sum-of-two-cubes-paper-with-ari-and-manjul}.

\subsection{Point counting.}

The proof of Lemma \ref{the tail estimate for squares} is very much the same as the proof of Lemma \ref{the tail estimate}, except that nontriviality does not rule out points in a particular totally isotropic subspace, so that we must treat them separately.

\begin{proof}[Proof of Lemma \ref{the tail estimate for squares}.]
Recall that $t_1\cdot t_2\ll \lambda$ (else $B(\vec{u}, \vec{t}, \lambda, X)\cap Y(\Z)^\nontriv = \emptyset$). As mentioned, each coefficient of an $(F^{(1)}, F^{(2)}) =: \vec{F}\in B(\vec{u}, \vec{t}, \lambda, X)\cap Y(\Z)^\nontriv$ is $\ll_\phi \lambda$. If there is an $i\in \{0, 1, 2, 3\}$ such that $F^{(1)}_i\cdot F^{(2)}_{3-i}\neq 0$ then, applying the divisor bound, we determine $(F^{(1)}_i, F^{(2)}_{3-i})$, up to $\ll_\phi \lambda^{o(1)}$ choices, from $((F_k^{(1)}, F_{3-k}^{(2)}))_{0\leq k\leq 3, k\neq i}$, and there are $\ll_\phi \lambda^5\cdot t_1\cdot t_2^3\cdot \left(\frac{\lambda}{t_1\cdot t_2^3} + 1\right) = \lambda^6 + \lambda^5\cdot t_1\cdot t_2^3$ choices for the latter.

It remains to treat those $\vec{F}$ with $F^{(1)}_i\cdot F^{(2)}_{3-i} = 0$ for all $0\leq i\leq 3$. If there is a $1\leq j\leq 2$ and an $i\in \{0, 3\}$ such that $F^{(j)}_i = F^{(j)}_{i+(-1)^i} = 0$ or else $F^{(j)}_i = F^{(3-j)}_i = 0$ then evidently $\vec{F}\not\in V(\Z)^\nontriv$. Hence there is a $1\leq j\leq 2$ such that $F^{(j)}_0 = F^{(j)}_3 = F^{(3-j)}_1 = F^{(3-j)}_2 = 0$. Thus the number of such $\vec{F}\in B(\vec{u}, \vec{t}, \lambda, X)$ is $\ll \left(\frac{\lambda}{t_1\cdot t_2^3} + 1\right)\cdot \frac{\lambda\cdot t_2^3}{t_1}\cdot \frac{\lambda\cdot t_1}{t_2}\cdot \lambda\cdot t_1\cdot t_2 + \frac{\lambda}{t_1\cdot t_2}\cdot \frac{\lambda\cdot t_2}{t_1}\cdot \left(\frac{\lambda\cdot t_1}{t_2^3} + 1\right)\cdot \lambda\cdot t_1\cdot t_2^3 = \lambda^4 + \lambda^3\cdot t_1\cdot t_2^3$, as desired.
\end{proof}

The proof of Lemma \ref{the bulk estimate for squares} is also similar to the proof of Lemma \ref{the bulk estimate}. We comment on the two major differences --- again, full details are given in \cite{sum-of-two-cubes-paper-with-ari-and-manjul}.

\begin{proof}[Proof of Lemma \ref{the bulk estimate for squares}.]
The principal differences are the following. In applying the smoothed delta symbol method (i.e.\ Theorem $2$ of Heath-Brown's \cite{chapterthree-heath-brown-delta-method}), we take $n = 8$ and $Q = \lambda$. In calculating the complete exponential sum, we note that, because we have an even number of variables and thus the mod-$q$ complete exponential sums no longer vanish for $q$ prime, we instead use the bound $\ll q^{5 + o(1)}$ for all $q$. For the same reason we no longer reduce to a sum over only powerful $q$, but rather we sum over all $q\ll \lambda$ in the bounding of the error term.

Otherwise the analytic argument is mutatis mutandis the same.
\end{proof}

\section{Proof of Theorem \ref{all six families} for $k\equiv 5\pmods{6}$.}\label{quintic mordell curves section}

Now to the sketch of the proof of Theorem \ref{all six families} when $k\equiv 5\pmods{6}$. As noted we will deduce this case from the intermediate results used to deduce the case $k\equiv 2\pmods{6}$ as follows.

We begin with the following preliminary lemma.

\begin{lem}\label{bad points don't actually matter}
Let $p\gg 1$ be prime. Let $A,B\in \Z_p$ be such that $-4A^3 - 27B^2\neq 0$. Let $E_{A,B} : y^2 = x^3 + A x + B$ and let $\mathcal{E}_{A,B}/\Z_p$ be its scheme-theoretic closure in $\P^2/\Z_p$. Let $\Sigma\subseteq E_{A,B}(\F_p)$ be such that $\#|\Sigma|\ll 1$. Let $P\in E_{A,B}(\Q_p)$. Then: there is an $(x,y)\in \mathcal{E}_{A,B}(\Z_p)$ with $x,y\in \Z_p^\times$ and $(x,y)\pmods{p}\not\in \Sigma$ such that $(x,y)\equiv P\pmods{2\cdot E_{A,B}(\Q_p)}$ as elements of $E_{A,B}(\Q_p)$.
\end{lem}

\begin{proof}
Let $S := \Sigma\cup \{\infty\}\cup \{(x,y)\in \mathcal{E}_{A,B}(\F_p) : x\cdot y = 0\}\cup (\mathcal{E}_{A,B}\bmod{p})^{\mathrm{sing.}}\subseteq \mathcal{E}_{A,B}(\Fbar_p)$. Note that $\#|S|\ll 1$. Next note that $0\to \mathcal{E}_{A,B}(\F_p)[2]\to \mathcal{E}_{A,B}(\F_p)\to 2\cdot \mathcal{E}_{A,B}(\F_p)\to 0$, and $\#|\mathcal{E}_{A,B}(\F_p)[2]|\ll 1$, so that $\#|2\cdot \mathcal{E}_{A,B}(\F_p)|\gg p$ since $\#|\mathcal{E}_{A,B}(\F_p)| = p + O(\sqrt{p})\gg p$. Consequently for each $\overline{P}\in \mathcal{E}_{A,B}(\F_p)$ there is a $\overline{Q}\in \mathcal{E}_{A,B}(\F_p)$ such that $\overline{Q}, \overline{P} + 2\overline{Q}\not\in S$ (because $p\gg 1$).

So for each $P\in E_{A,B}(\Q_p)$ there is a $\overline{Q}\in \mathcal{E}_{A,B}(\F_p)$ such that $\overline{Q}, (P\bmod p) + 2\overline{Q}\not\in S$, whence by Hensel a $Q\in E_{A,B}(\Q_p)$ such that, writing $P + 2Q =: (x,y)$, one has $x,y\in \Z_p^\times$, as desired.
\end{proof}

Next let us note the following.

\begin{lem}\label{either a simple root or else being the fourth power of a linear form is forced mod bad primes}
Let $p\gg 1$. Let $F\in \Sym^4(2)(\Z)$ be such that $I(F) = 0$ and $p\mid J(F)$. Then: either $p\mid F$ or else $F$ has splitting type either $(1^4)$ or else $(1^3 1)$ modulo $p$.
\end{lem}

\begin{proof}
If $p\mid F$ we are done. Otherwise by hypothesis $F$ has a double root modulo $p$, so its possible splitting types are $(1^4), (1^3 1)$, $(1^2 1^2)$, $(2^2)$, and $(1^2 1 1)$. Because $I(X^2\cdot (\alpha\cdot X^2 + \beta\cdot X Y + \gamma\cdot Y^2)) = \gamma^2$ all but the first two splitting types are ruled out and we are done.
\end{proof}

This has the following consequence.

\begin{lem}\label{relevant local binary quartics have a root}
Let $p\gg 1$ be a prime. Let $F\in \Sym^4(2)(\Z)$ with $I(F) = 0$ and $p\mid J(F)$ be such that $p\nmid F$ and such that $F$ does not have splitting type $(1^4)$ modulo $p$. Then: $F$ has a root in $\P^1(\Q_p)$.
\end{lem}

\begin{proof}
By Lemma \ref{either a simple root or else being the fourth power of a linear form is forced mod bad primes} $F$ has a simple root over $\F_p$, QED by Hensel.
\end{proof}

We use Lemma \ref{relevant local binary quartics have a root} as follows.

\begin{lem}\label{at large primes relevant binary quartics are locally soluble even when twisted}
Let $p\gg 1$ be a prime. Let $F\in \Sym^4(2)(\Z)$ with $I(F) = 0$ be such that $Z^2 = F(X,Y)$ has a $\Q_p$-point. Let $d\mid J(F)$. Then: $d\cdot Z^2 = F(X,Y)$ has a $\Q_p$-point, and so in particular if $a^3\mid J(F)$ then it follows that there is a $g\in \PGL_2(\Q)$ such that $\frac{(g\cdot F)(X,Y)}{a}\in \Z_p[X,Y]$ with $(g\cdot F)(X,Y) := (\det{g})^{-2}\cdot F((X,Y)\cdot g)$.
\end{lem}

\begin{proof}
If $p\nmid J(F)$ we are done because $p\gg 1$. Otherwise because $Z^2 = F(X,Y)$ has a $\Q_p$-point $F$ is $\PGL_2(\Q_p)$-equivalent to the Kummer image $F_{(x,y)}(X,Y) := X^4 - 6x\cdot X^2 Y^2 + 8y\cdot X Y^3 - 3x^2\cdot Y^4$ of a point $(x,y)\in E_{0,-27\cdot J(F)}(\Q_p)$ with (by Lemma \ref{bad points don't actually matter}) $x,y\in \Z_p^\times$, which is thus automatically not of mod-$p$ splitting type $(1^4)$, and so by Lemma \ref{relevant local binary quartics have a root} we conclude that $F$ has a root in $\P^1(\Q_p)$. Thus indeed $Z^2 = \frac{F(X,Y)}{d}$ has a $\Q_p$-point (indeed one with $Z = 0$), giving the first claim. As for the second claim, it follows in that case that there is a $g\in \PGL_2(\Q_p)$ such that $\frac{(g\cdot F)(X,Y)}{a}$ is the Kummer image of a point $(x,y)\in E_{0,\frac{-27 J(F)}{a^3}}(\Q_p)$ with (by Lemma \ref{bad points don't actually matter}) $x,y\in \Z_p^\times$, and so in particular that $\frac{(g\cdot F)(X,Y)}{a}\in \Z_p[X,Y]$ as desired (to conclude approximate $g$ sufficiently closely by an element of $\PGL_2(\Q)$).
\end{proof}

Now let us repeat the notation of Section \ref{square mordell curves section}: let $V := 2\otimes \Sym_3(2)$, let $G$ be the image in $\GL(V)$ of $\{(g,h)\in \GL_2\times \GL_2 : \det{g}\cdot (\det{h})^3 = 1\}$, etc.

Note that the (first) invariant binary quartic associated by Bhargava-Ho in their \cite{chapterthree-bhargava-ho} to a $\vec{v}\in V$ --- via $V\inj 2^{\otimes 4}$, i.e.\ via regarding a pair of binary cubic forms as a triply symmetric hypercube in the usual way (i.e.\ $\left(\sum_{i=0}^3 {3\choose i}\cdot a_i^{(j)}\cdot x^i\right)_{j=1}^2\mapsto \left(a_{j+k+\ell-3}^{(i)}\right)_{1\leq i,j,k,\ell\leq 2}$) --- is $F_{\vec{v}}(X,Y) := -\frac{1}{27}\cdot \disc_{(x,y)}(X\cdot v_1(x,y) - Y\cdot v_2(x,y))$. By construction $F_{\left(\sum_{i=0}^3 {3\choose i}\cdot a_i^{(j)}\cdot x^i\right)_{j=1}^2}\in \Z\left[a_0^{(1)}, \ldots, a_3^{(2)}, X, Y\right]$.

\begin{lem}\label{integral representatives for quintic mordell curves}
There is an absolute constant $\kappa\in \Z - \{0\}$ such that the following holds. Let $0\neq B\in \Z$ be sixth-power-free. Then: there is a bijection between $\Sel_2(E_{0, B^5}/\Q) - \{0\}$ and $$G(\Q)\backslash \left\{\vec{v}\in \frac{1}{\kappa}\cdot V(\Z)^\nontriv : \substack{a_2(\vec{v}) = 0, a_6(\vec{v}) = 2 B, \\B\cdot Z^2 = F_{\vec{v}}(X,Y)\text{ everywhere locally soluble}}\right\}.$$
\end{lem}

\begin{proof}
Factor $B =: B_*\cdot B_\square^2$ with $B_*$ squarefree --- thus $E_{0,B^5}\simeq E_{0,B^2\cdot B_*^3}$ over $\Q$.

Of course (by e.g.\ Theorem $3.5$ of Bhargava-Shankar's \cite{chaptertwo-bhargava-shankar-2-selmer}) $\Sel_2(E_{0,B^2\cdot B_*^3}/\Q) - \{0\}$ is in bijection with $\PGL_2(\Q)\backslash \{F\in \Sym^4(2)(\Z)^\nontriv : I(F) = 0, J(F) = -2^6\cdot 3^3\cdot B^2\cdot B_*^3, Z^2 = F(X,Y)\text{ everywhere locally soluble}\}$. Given such an $F\in \Sym^4(2)(\Z)^\nontriv$, we first claim that there is an absolute constant $\kappa\in \Z - \{0\}$ (thus $|\kappa|\ll 1$), a $\vec{v}\in \frac{1}{\kappa}\cdot V(\Z)^\nontriv$, and a $g\in \PGL_2(\Q)$ such that $B_*\cdot F_{\vec{v}}(X,Y) = (g\cdot F)(X,Y)$.

To see this, note that the curve $B_*\cdot Z^2 = F(X,Y)$ is a genus one curve with a degree-two line bundle (via the pullback of $\mathcal{O}(1)$ via $[X,Y,Z]\mapsto X/Y$) whose Jacobian (namely $y^2 = x^3 + B^2$) has a marked rational $3$-torsion point (namely $(0,B)$). Thus by Theorem $6.16$ of Bhargava-Ho's \cite{chapterthree-bhargava-ho} it follows that there is a $\vec{v}\in V(\Q)$ such that $\frac{F(X,Y)}{B_*}$ is $\PGL_2(\Q)$-equivalent to $F_{\vec{v}}(X,Y)$.

Note that this already shows that, for each $p$, there is a $\vec{w}\in G(\Q_p)\cdot \vec{v}$ with $\vec{w}\in p^{-O(1)}\cdot V(\Z_p)$, since by following the procedure outlined at the bottom of page $57$ of Bhargava-Ho's \cite{chapterthree-bhargava-ho} (and symmetrizing suitably) one produces a universal formula for a $\widetilde{\vec{v}}\in V\left(\Z\left[\mathrm{coeff.s}(\widetilde{F}), \sqrt{-27 J(\widetilde{F})}, (6\cdot J(\widetilde{F}))^{-1}\right]\right)$ such that $F_{\widetilde{\vec{v}}}$ is $\PGL_2\left(\Q\left(\mathrm{coeff.s}(\widetilde{F}), \sqrt{-27 J(\widetilde{F})}\right)\right)$-equivalent to a given $\widetilde{F}\in \Sym^4(2)$ with $I(\widetilde{F}) = 0$ and $J(\widetilde{F})\neq 0$.

But furthermore Lemma \ref{at large primes relevant binary quartics are locally soluble even when twisted} implies that $\vec{v}$ is locally soluble at all $p\gg 1$, so that, just as in Bhargava-Ho's \cite{chapterthree-bhargava-ho}, by strong approximation for $G/\Q$ it follows that there is a $\gamma\in G(\Q)$ such that, for all $p$, $\gamma\cdot \vec{v}\in p^{-O(1)}\cdot V(\Z_p)$, and, for all $p\gg 1$, $\gamma\cdot \vec{v}$ is furthermore $G(\Z_p)$-equivalent to the Kummer image\footnote{Here to define the Kummer image we use the formula specified in the paragraph after $(14)$ on page $14$ of Bhargava-Ho's \cite{chapterthree-bhargava-ho-website} in this case (so crucially $v = w = 0$ in their notation).} of an $(x,y)\in E_{0,B^2}(\Q_p)$ with (by Lemma \ref{bad points don't actually matter}) $x,y\in \Z_p^\times$, so that in particular $\gamma\cdot \vec{v}\in V(\Z_p)$ for these $p$.

This proves the desired claim that for each locally soluble $F\in \Sym^4(2)(\Z)^\nontriv$ with $I(F) = 0$ and $J(F) = -2^6\cdot 3^3\cdot B^2\cdot B_*^3$ there is a $\vec{v}\in \frac{1}{\kappa}\cdot V(\Z)^\nontriv$ such that $F_{\vec{v}}(X,Y)$ and $\frac{F(X,Y)}{B_*}$ are $\PGL_2(\Q)$-equivalent. The map $\PGL_2(\Q)\cdot F\mapsto G(\Q)\cdot \vec{v}$ thus defined (i.e.\ by appeal to Theorem $6.16$ of Bhargava-Ho's \cite{chapterthree-bhargava-ho}) is certainly injective because we recover $\PGL_2(\Q)\cdot F$ from $G(\Q)\cdot \vec{v}$ by the defining property of the map, but it is also of course surjective --- given $\vec{v}$ just form $B_*\cdot F_{\vec{v}}(X,Y)$ --- so we are done.
\end{proof}

Now we may prove Theorem \ref{all six families} for $k\equiv 5\pmods{6}$.

\begin{proof}[Proof of Theorem \ref{all six families} for $k\equiv 5\pmods{6}$.]
Repeat the proof of Theorem \ref{all six families} in the case $k\equiv 2\pmods{6}$ --- including the appeal to the uniformity estimate which is proved in full in \cite{sum-of-two-cubes-paper-with-ari-and-manjul} --- mutatis mutandis, with the exception that the local conditions (besides those modulo $\kappa^{O(1)}$) imposed be the everywhere local solubility of $2 a_6(\vec{v})\cdot Z^2 = F_{\vec{v}}(X,Y)$ --- rather than that of $Z^2 = F_{\vec{v}}(X,Y)$ --- throughout. (Note that the fact that these provide the same local densities ultimately reduces to Lemma $3.20$ of Bhargava-Shankar's \cite{chaptertwo-bhargava-shankar-2-selmer}.)
\end{proof}

\section{Proof of Theorem \ref{all six families} for $k\equiv 4\pmods{6}$.}\label{quartic mordell curves section}

Now to the sketch of the proof of Theorem \ref{all six families} when $k\equiv 4\pmods{6}$. The relevant inputs are as follows.

\begin{lem}\label{integral representatives for quartic mordell curves}
There is an absolute constant $\kappa\in \Z - \{0\}$ such that the following holds. Let $0\neq B\in \Z$ be sixth-power-free. Then: there is a bijection between $\Sel_2(E_{0, B^4}/\Q) - \{0\}$ and $$\PGL_2(\Q)\backslash \left\{F\in \frac{1}{\kappa}\cdot V(\Z)^\nontriv : \substack{I(F) = 0, J(F) = -2^6\cdot 3^3\cdot B, \\B\cdot Z^2 = F(X,Y)\text{ everywhere locally soluble}}\right\}.$$
\end{lem}

\begin{proof}
Factor $B =: B_*\cdot B_\square^2$ with $B_*$ squarefree --- thus $E_{0,B^4}\simeq E_{0,B\cdot B_*^3}$ over $\Q$. By e.g.\ Theorem $3.5$ of Bhargava-Shankar's \cite{chaptertwo-bhargava-shankar-2-selmer} $\Sel_2(E_{0,B^4}/\Q) - \{0\}$ is in bijection with $$\PGL_2(\Q)\backslash \left\{F\in \Sym^4(2)(\Z)^\nontriv : \substack{I(F) = 0, J(F) = -2^6\cdot 3^3\cdot B\cdot B_*^3, \\Z^2 = F(X,Y)\text{ everywhere locally soluble}}\right\}.$$

Given such an $F\in \Sym^4(2)(\Z)^\nontriv$ we claim that there is an absolute constant $\kappa\in \Z - \{0\}$ (thus $|\kappa|\ll 1$) and a $g\in \PGL_2(\Q)$ such that $\frac{(g\cdot F)(X,Y)}{B_*}\in \frac{1}{\kappa}\cdot \Z[X,Y]$, where $(g\cdot F)(X,Y) := (\det{g})^{-2}\cdot F((X,Y)\cdot g)$. By strong approximation for $\PGL_2(\Q)$ this is a local question --- for $p\gg 1$ we apply Lemma \ref{at large primes relevant binary quartics are locally soluble even when twisted}, while for the remaining $p\ll 1$ of course $\frac{F(X,Y)}{B_*}\in \frac{1}{p}\cdot \Z[X,Y]$ already, so we are done with the claim.

Of course at the level of $\PGL_2(\Q)$-orbits this map amounts to the map $\PGL_2(\Q)\cdot F(X,Y)\mapsto \PGL_2(\Q)\cdot \frac{F(X,Y)}{B_*}$ so it is evidently a bijection, so that we are done.
\end{proof}

\begin{lem}\label{codimension two input for the uniformity estimate for quintic mordell curves}
Let $p\gg 1$. Let $F\in \Sym^4(2)(\Z)$ with $I(F) = 0$ be such that $-27 J(F)\cdot Z^2 = F(X,Y)$ is not soluble over $\Q_p$. Then: either $p\mid F$ or else $F$ has splitting type $(1^4)$ modulo $p$.

Consequently the image modulo $p$ of the set of $F\in \Sym^4(2)(\Z)$ with $I(F) = 0$ such that either $\PGL_2(\Z_p)\cdot F\neq \PGL_2(\Q_p)\cdot F\cap V(\Z_p)$ or else $-27 J(F)\cdot Z^2 = F(X,Y)$ is not soluble over $\Q_p$ lies in a codimension $2$ subvariety of $\{F : I(F) = 0\}$.
\end{lem}

\begin{proof}
Apply Lemma \ref{relevant local binary quartics have a root} for the first claim, and then repeat the argument given in Section \ref{the uniformity estimate for mordell curves section} for the second.
\end{proof}

Thus we may prove Theorem \ref{all six families} for $k\equiv 4\pmods{6}$ in the same way as for $k\equiv 5\pmods{6}$.

\begin{proof}[Proof of Theorem \ref{all six families} for $k\equiv 4\pmods{6}$.]
Repeat the proof of Theorem \ref{all six families} --- including the uniformity estimate (via Lemma \ref{codimension two input for the uniformity estimate for quintic mordell curves}) --- in the case $k\equiv 1\pmods{6}$ mutatis mutandis, with the exception that the local conditions (besides those modulo $\kappa^{O(1)}$) imposed be the everywhere local solubility of $-27 J(F)\cdot Z^2 = F(X,Y)$ --- rather than that of $Z^2 = F(X,Y)$ --- throughout. (Note that the fact that these provide the same local densities ultimately reduces to Lemma $3.20$ of Bhargava-Shankar's \cite{chaptertwo-bhargava-shankar-2-selmer}.)
\end{proof}

Note that we could have also used Corollary $6.20$ of Bhargava-Ho's \cite{chapterthree-bhargava-ho} and the discussion thereafter --- replacing $-27 J(F)\cdot Z^2 = F(X,Y)$ by $Z^2 = H(F)(X,Y)$ with $H(F)(X_1,X_2) := \disc_{(x,y)}\left(\det\left(x\cdot \frac{\d(F(X_1, X_2))}{\d X_1 \d X_j \d X_k} - y\cdot \frac{\d(F(X_1, X_2))}{\d X_2 \d X_j \d X_k}\right)_{j,k=1}^2\right)$ throughout. Indeed the argument is again the same save for the production of integral representatives, i.e.\ save for producing an $F\in \Sym^4(2)(\Z)^\nontriv$ for which $Z^2 = H(F)(X,Y)$ (or alternatively $\PGL_2(\Q)\cdot H(F)$) represents a given element of $\Sel_2(E_{0,B^4}/\Q) - \{0\}$, and we produce such integral representatives as follows.

\begin{lem}\label{integral representatives for the other way of treating quartic mordell curves}
There is an absolute constant $\kappa\in \Z - \{0\}$ such that the following holds. Let $0\neq B\in \Z$ be cubefree. Then: there is a bijection between $\Sel_2(E_{0,B^4}/\Q) - \{0\}$ and $$\PGL_2(\Q)\backslash \left\{F\in \frac{1}{\kappa}\cdot \Sym_4(2)(\Z)^\nontriv : \substack{I(F) = 0, J(F) = -2^6\cdot 3^3\cdot B, \\Z^2 = H(F)(X,Y)\text{ everywhere locally soluble}}\right\}.$$
\end{lem}

\begin{proof}
Factor $B =: B_*\cdot B_\square^2$ with $B_*$ squarefree --- thus $E_{0,B^4}\simeq E_{0,B_\square^2\cdot B_*^4}$ over $\Q$. As usual (i.e.\ just as in the proofs of Lemmas \ref{integral representatives for quintic mordell curves} and \ref{integral representatives for quartic mordell curves}) strong approximation for $\PGL_2/\Q$ reduces the problem to a local question. Now from the discussion after $(14)$ on page $14$ of Bhargava-Ho's \cite{chapterthree-bhargava-ho-website} we see that given an $(x,y)\in E_{0,B_\square^2\cdot B_*^4}(\Q_p)$ with $x,y\in \Z_p^\times$ (which by Lemma \ref{bad points don't actually matter} is the only case we must consider) there is a $\vec{v}\in (2\otimes \Sym_3(2))(\Z)^\nontriv$ such that $v_1 = ((\in \Z_p^\times), 0, 0, (\in \Z_p^\times))$ and $v_2 = (0, (\in \Z_p^\times), (\in \Z_p^\times), 0)$, and such that $F_{\vec{v}}(X,Y)$ is $\PGL_2(\Q_p)$-equivalent to the usual Kummer image $F_{(x,y)}(X,Y) := X^4 - 6x\cdot X^2 Y^2 + 8y\cdot X Y^3 - 3x^2\cdot Y^4$ (thus $a_6(\vec{v}) = 2\cdot B_\square\cdot B_*^2$). Then by the discussion after $(71)$ on page $65$ of Bhargava-Ho's \cite{chapterthree-bhargava-ho} we find an explicit $g\in M_2(\Z)$ with $\det{g} = a_6(\vec{v})$ such that $g\cdot \vec{v}$ is the image (under the evident map, i.e.\ under $\Sym_4(2)\inj 2\otimes \Sym_3(2)$) of a $G\in \Sym_4(2)(\Z)$, namely $-G(X,Y) = x^2\cdot X^4 + 4\widetilde{y}\cdot X^3 Y + 6x\cdot X^2 Y^2 + \frac{4x^2}{\widetilde{y}}\cdot X Y^3 + Y^4$, where $\widetilde{y} := y - \frac{a_6(\vec{v})}{2}$ (thus $\widetilde{y}^2 + a_6(\vec{v})\cdot \widetilde{y} = x^3$), which therefore has $I(G) = 0$ and $J(G) = 2^4\cdot 3^3\cdot a_6(\vec{v})^2 = 2^6\cdot 3^3\cdot B_\square^2\cdot B_*^4$. We are thus done for $p\ll 1$ or else for $p\nmid B_*$ by considering $-\frac{G(X,Y)}{B_*}$. Otherwise $p\gg 1$ and $p\mid B_*$, and then writing $\widetilde{G}(X,Y) := -G\left(Y, X - \frac{x^2}{\widetilde{y}}\cdot Y\right) =: X^4 + c\cdot X^2 Y^2 + \cdots + e\cdot Y^4$ by inspection --- namely $\widetilde{y}^2\equiv x^3\pmods{a_6(\vec{v})}$, so $\left(\frac{x^2}{\widetilde{y}}\cdot X + Y\right)^4\equiv -G(X,Y)\pmods{a_6(\vec{v})}$, and moreover $I(G) = 0$ --- we have that $a_6(\vec{v})\mid c, d$ and $a_6(\vec{v})^2\mid e$, so that $\frac{\widetilde{G}(p\cdot X,Y)}{p^3}\in \Z_p[X,Y]$. Thus $\widetilde{\widetilde{G}}(X,Y) := \frac{(\diag(p,1)\cdot \widetilde{G})(X,Y)}{B_*}\in \Z_p[X,Y]$ is such that $I(\widetilde{\widetilde{G}}) = 0$, $J(\widetilde{\widetilde{G}}) = -2^6\cdot 3^3\cdot B$, and such that $H(\widetilde{\widetilde{G}})(X,Y)$ is $\PGL_2(\Q_p)$-equivalent to $F_{(x,y)}(X,Y)$, so that $\widetilde{\widetilde{G}}$ is the desired integral representative.

We conclude as we did in the proof of Lemma \ref{integral representatives for quartic mordell curves}.
\end{proof}

\newpage

\renewcommand{\refname}{References.}

\bibliographystyle{amsplain}

\bibliography{quadricsinarithmeticstatistics}

\end{document}